\title{Alexander varieties and largeness of finitely presented groups}
\newtheorem{thm}{Theorem}
\newtheorem{lemma}[thm]{Lemma}
\newtheorem{cor}[thm]{Corollary}
\newtheorem{prop}[thm]{Proposition}
\numberwithin{thm}{section}
\newcommand{\al}{\alpha}
\newcommand{\gam}{\Gamma}
\newcommand{\bZ}{\mathbb{Z}}
\newcommand{\bR}{\mathbb{R}}
\newcommand{\bC}{\mathbb{C}}
\newcommand{\bN}{\mathbb{N}}
\DeclareMathOperator{\Hom}{Hom}
\DeclareMathOperator{\Mod}{Mod}
\DeclareMathOperator{\rk}{rk}
\DeclareMathOperator{\Tor}{Tor}
\newcommand{\yt}{\widetilde}
\newcommand{\ol}{\overline}
\newcommand{\yh}{\widehat}
\author{Thomas Koberda}
\address{Department of Mathematics, Harvard University, 1 Oxford St., Cambridge, MA 02138, USA}
\email{koberda@math.harvard.edu}
\keywords{}
\begin{document}
\begin{abstract}
Let $X$ be a finite CW complex.  We show that the fundamental group of $X$ is large if and only if there is a finite cover $Y$ of $X$ and a sequence of finite abelian covers $\{Y_N\}$ of $Y$ which satisfy $b_1(Y_N)\geq N$.  We give some applications of this result to the study of hyperbolic $3$--manifolds, mapping classes of surfaces and combinatorial group theory.
\end{abstract}
\maketitle
\begin{center}
\today
\end{center}
\section{Introduction}
\subsection{Main results}
Let $X$ be a finite CW complex, or equivalently let $G=\pi_1(X)$ be a finitely presented group.  We say that $X$ is {\bf large} if there is a finite cover $Y\to X$ and a surjection $\pi_1(Y)\to F_2$, where here $F_2$ denotes a free group of rank two.  We say that $X$ has {\bf virtually infinite first Betti number} if for each $N$ there exists a finite cover $X_N\to X$ with $b_1(X_N)\geq N$.  The principal result of this paper is the following characterization of large finitely presented groups:

\begin{thm}\label{t:large}
Let $X$ be a finite CW complex.  Then the following are equivalent:
\begin{enumerate}
\item
$X$ is large.
\item
There is a finite cover $Y\to X$ such that for each $N$, there is a finite abelian cover $Y_N\to Y$ with $b_1(Y_N)\geq N$.
\item
There is a finite cover $Y\to X$ such that the Alexander variety of $Y$ contains infinitely many torsion points.
\end{enumerate}
\end{thm}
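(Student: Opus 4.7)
The plan is to prove the cycle $(1) \Rightarrow (2) \Rightarrow (3) \Rightarrow (1)$. For $(1) \Rightarrow (2)$: given a finite cover $Y \to X$ with a surjection $\rho : \pi_1(Y) \twoheadrightarrow F_2$, I would compose with the natural quotients $F_2 \twoheadrightarrow (\bZ/N\bZ)^2$ to obtain, for each $N$, a finite abelian cover $Y_N \to Y$ with deck group $(\bZ/N)^2$. By Nielsen--Schreier, $\ker(F_2 \twoheadrightarrow (\bZ/N)^2)$ is free of rank $N^2+1$; since this kernel is a quotient of $\pi_1(Y_N)$ via $\rho$, we conclude $b_1(Y_N) \geq N^2+1 \to \infty$.

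For $(2) \Rightarrow (3)$: I would invoke the standard decomposition of cohomology over a finite abelian cover. If $Y_A \to Y$ is the cover classified by $\pi_1(Y) \twoheadrightarrow A$, then
\[
b_1(Y_A) \;=\; b_1(Y) + \sum_{1 \neq \chi \in \widehat{A}} \dim_{\bC} H^1(Y; \bC_\chi),
\]
where each character of $A$ is pulled back to a torsion character of $\pi_1(Y)$. A summand is non-zero precisely when $\chi$ lies in the Alexander variety $V_1(Y)$, and every summand is bounded above by a constant depending only on the CW structure of $Y$ (for instance, the number of $1$-cells). Hence if $b_1(Y_N) \to \infty$ along a sequence of finite abelian covers of $Y$, then $V_1(Y)$ must contain infinitely many torsion characters of $\pi_1(Y)$.

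For $(3) \Rightarrow (1)$ --- the substantive direction --- I would invoke Laurent's theorem on torsion points in algebraic tori (Manin--Mumford for $\mathbb{G}_m^n$): the Zariski closure of an infinite set of torsion points in $(\bC^*)^n$ is a finite union of torsion translates of subtori. Infinitely many torsion characters in $V_1(Y)$ therefore force a positive-dimensional torsion translate $\chi_0 \cdot T \subset V_1(Y)$. Passing to the finite abelian cover $Y' \to Y$ classified by $\chi_0$ converts this into a positive-dimensional subtorus $T' \subset V_1(Y')$ through the identity. Dually, $T'$ is governed by a primitive epimorphism $\phi : \pi_1(Y') \twoheadrightarrow \bZ^k$ with $k = \dim T' \geq 1$, and the containment $T' \subset V_1(Y')$ forces the first Alexander module of $(Y', \phi)$ to have full support on $T'$; in particular $H_1(\ker\phi; \bC)$ is infinite-dimensional. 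From here, via a Stallings--Bieri--Strebel style analysis of the infinite abelian cover classified by $\phi$, one extracts a further finite cover $Y'' \to Y'$ whose fundamental group surjects onto $F_2$.

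The main obstacle is this last passage. In the K\"ahler setting a positive-dimensional component of the characteristic variety is pulled back from an orbifold fibration via Arapura's theorem, producing a large quotient directly; for general finite CW complexes no such fibration is available, and one must work combinatorially with the Alexander module. Making the bootstrapping from ``$H_1(\ker\phi; \bC)$ is infinite-dimensional'' to ``some finite-index subgroup of $\pi_1(Y')$ surjects onto $F_2$'' precise is the technical heart of the theorem.
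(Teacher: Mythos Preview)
Your treatment of $(1)\Rightarrow(2)$ and $(2)\Rightarrow(3)$ matches the paper's argument essentially verbatim (the paper uses cyclic quotients $F_2\to\bZ\to\bZ/N\bZ$ rather than $(\bZ/N\bZ)^2$, but this is immaterial).

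The gap is in $(3)\Rightarrow(1)$, and you have correctly located it yourself. After Laurent's theorem hands you a positive-dimensional rational torus coset inside $V_1$, you propose to pass to a cover, obtain a surjection $\phi:\pi_1(Y')\to\bZ^k$ whose kernel has infinite-dimensional first homology, and then invoke ``Stallings--Bieri--Strebel style analysis'' to produce a free quotient. This last step is not a known theorem: for a general finitely presented group, an infinite cyclic or abelian cover with infinitely generated $H_1$ does not by itself yield largeness, and BNS theory only tells you the kernel fails to be finitely generated, which is strictly weaker. You are right that in the K\"ahler world Arapura closes this gap via an orbifold fibration, but nothing analogous is available here.

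The paper takes a completely different route at this point. Rather than analyzing a single infinite abelian cover, it feeds the rational subtorus into \emph{Lackenby's largeness criterion}: a finitely presented group is large if it admits a nested tower $G>G_1>G_2>\cdots$ of normal $p$-power index subgroups which fails property~$(\tau)$ and has positive mod-$p$ homology rank gradient. Since abelian quotients never have property~$(\tau)$ (an easy amenability argument), the task reduces to manufacturing a tower of finite \emph{cyclic} covers $X_n\to X$ with deck group $\bZ/p^nB\bZ$ (for a suitable prime $p$ coprime to the order $B$ of the torsion translate) such that a definite fraction of the characters of $\bZ/p^nB\bZ$ pull back to points on the given one-dimensional rational subtorus inside $V_1$. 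The Betti number formula then gives $b_1(X_n)\ge p^n-1$, hence linear growth in the index, hence positive homology rank gradient, hence largeness by Lackenby. The explicit construction of the homomorphisms $\alpha_n:\pi_1(X)\to\bZ/p^nB\bZ$ and the verification that they form a nested $p$-tower is the actual technical content of the argument; your Bieri--Strebel sketch does not substitute for it.
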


The Alexander variety $V_1(Y)\subset(\bC^*)^r$ of a finite CW complex $Y$ will be defined in Section \ref{s:back}.
If $X$ is large then clearly $X$ has virtually infinite first Betti number, though the converse is not true in general.  Theorem \ref{t:large} illustrates the degree to which the converse holds.
We now note some corollaries of Theorem \ref{t:large}.  Firstly, one can recover the following special case of a result of Cooper, Long and Reid in \cite{cooperlongreid}.  Cooper, Long and Reid give a proof of this result without the assumption that $M$ is fibered:

\begin{cor}\label{c:clr}
Let $M$ be a noncompact, finite volume, fibered, hyperbolic $3$--manifold.  Then $M$ is large.
\end{cor}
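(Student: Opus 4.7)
The plan is to verify condition (3) of Theorem \ref{t:large} by exhibiting a finite cover $Y \to M$ whose Alexander variety $V_1(Y)$ contains infinitely many torsion points.

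Write $M = \Sigma \times_\phi S^1$, with $\Sigma$ the fiber (an oriented surface with nonempty boundary, since $M$ is noncompact of finite volume) and $\phi$ the pseudo-Anosov monodromy. Since $\Sigma$ has boundary, $\pi_1(\Sigma)$ is a free group of finite rank and so admits finite characteristic subgroups of arbitrarily large index. Any such subgroup is $\phi_*$-invariant, so after possibly replacing $\phi$ by a finite power (so that individual boundary components of the cover are fixed rather than merely permuted), it determines a finite cover $Y = \Sigma' \times_{\phi'} S^1$ of $M$. We choose the index large enough that $\Sigma'$ has at least three boundary components $\partial_1, \ldots, \partial_{s'}$, each fixed setwise by $\phi'$; since $\phi'$ is orientation-preserving, its restriction to each $\partial_i$ is isotopic to the identity of $S^1$.

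The crux is to produce a positive-dimensional subtorus of $V_1(Y)$. Since $\phi'_*$ fixes every boundary class in $H_1(\Sigma';\bZ)$, the subtorus $T_\Sigma \subset \Hom(\pi_1(\Sigma'), \bC^*)$ of $\phi'$-invariant characters has dimension at least $s'-1$. For each $i$, let $T_i \subset T_\Sigma$ be the codimension-one subtorus cut out by $\alpha([\partial_i]) = 1$. For $\alpha \in T_i \setminus \{1\}$, the long exact sequence of the pair $(\Sigma', \partial\Sigma')$ together with the Poincar\'e--Lefschetz vanishing $H^2(\Sigma', \partial\Sigma'; \bC_\alpha) = 0$ shows that the restriction $R\colon H^1(\Sigma'; \bC_\alpha) \to H^1(\partial_i; \bC) \cong \bC$ is surjective. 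Since $\phi'|_{\partial_i}$ is isotopic to the identity, $R \circ \phi'^* = R$, so $(\phi'^* - 1)H^1(\Sigma'; \bC_\alpha) \subseteq \ker R$; comparing dimensions forces $\ker(\phi'^* - 1)$ to be nonzero in $H^1(\Sigma'; \bC_\alpha)$. Applying the Wang sequence of the fibration $\Sigma' \hookrightarrow Y \twoheadrightarrow S^1$ then places $(\alpha, 1)$ in $V_1(Y)$ for every $\alpha \in T_i$, so $V_1(Y)$ contains the subtorus $\{(\alpha, 1) : \alpha \in T_i\}$ of dimension at least $s' - 2 \geq 1$.

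A positive-dimensional algebraic subtorus of $(\bC^*)^{b_1(Y)}$ contains infinitely many torsion points, so $V_1(Y)$ does as well, and Theorem \ref{t:large} yields that $M$ is large. The delicate step is the uniform production of $\phi'^*$-fixed vectors in $H^1(\Sigma'; \bC_\alpha)$ as $\alpha$ ranges over $T_i$; this relies on the noncompactness of $M$ (supplying the boundary components $\partial_i$) and on the orientation-preserving nature of $\phi$ (forcing trivial action on the $H^1$ of each boundary circle).
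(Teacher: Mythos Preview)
Your argument is correct and reaches the goal, but it takes a genuinely different route from the paper's proof, and there is one step that deserves a word of justification.

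\textbf{Comparison with the paper.} The paper verifies condition (2) of Theorem~\ref{t:large} by a direct geometric construction: after passing to a cover with at least three punctures fixed by the monodromy, one ``unwinds'' a peripheral loop to produce cyclic covers of the fiber with arbitrarily many punctures; after raising the monodromy to a power, these give abelian covers of the mapping torus with $b_1 \geq N$. Your argument instead verifies condition (3) directly, locating a positive--dimensional rational subtorus in $V_1(Y)$ via twisted cohomology and the Wang sequence. Your approach is more algebraic and arguably more conceptual---it identifies exactly \emph{which} subtorus of the character variety is responsible for the Betti--number growth---whereas the paper's argument is more elementary and avoids any discussion of local systems. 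The two are of course closely related: your torus $T_i$ parametrizes precisely the cyclic covers the paper constructs by unwinding $\partial_i$.

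\textbf{One point to tighten.} The assertion $R \circ \phi'^{*} = R$ is not automatic from ``$\phi'|_{\partial_i}$ is isotopic to the identity of $S^1$'' alone. In general, if $c = [\partial_i] \in \pi_1(\Sigma',p)$ and $\phi'_*(c) = gcg^{-1}$, a short crossed--homomorphism computation gives $R(\phi'^{*}f) = \alpha(g)\,R(f)$, so a priori $R \circ \phi'^{*} = \alpha(g)\,R$. To get $\alpha(g)=1$ you should first isotope $\phi'$ so that it fixes $\partial_i$ pointwise (possible since $\phi'|_{\partial_i}$ is isotopic to $\mathrm{id}_{S^1}$, and the isotopy extends over a collar), and then take the basepoint $p$ on $\partial_i$; with these choices $\phi'_*(c)=c$ and your equivariance claim holds on the nose. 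Alternatively, even without this normalization the conclusion survives: $\phi'^{*}$ then has $\alpha(g)$ as an eigenvalue, so $(\alpha,\alpha(g)^{\pm 1})\in V_1(Y)$, and $\{(\alpha,\alpha(g)^{\pm 1}):\alpha\in T_i\}$ is still a positive--dimensional rational subtorus (the map $\alpha\mapsto\alpha(g)$ being a character of $T_i$). Either way the proof goes through.
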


Let $S$ be an orientable surface with finite, negative Euler characteristic, and let $\Mod(S)$ denote its mapping class group.  Let $\psi\in\Mod(S)$ denote a nontrivial mapping class.  If $\yt{S}\to S$ is a finite, $\psi$--invariant cover, then $\psi$ admits a lift $\yt{\psi}$ to $\Mod(\yt{S})$.  In particular, we get an action $\yt{\psi}_*$ of $\yt{\psi}$ on $H_1(\yt{S},\bC)$.  We will write $\rho(\yt{\psi}_*)$ for the spectral radius of this action, which is to say the largest absolute value of an eigenvalue of $\yt{\psi}_*$.  Whereas $\yt{\psi}_*$ depends on the choice of lift of $\psi$, the value of $\rho(\yt{\psi}_*)$ does not.

Recall that each nontrivial mapping class $\psi$ of $S$ is either finite order, reducible or pseudo-Anosov, depending on whether or not $\psi$ has finite order and whether or not $\psi$ stabilizes the isotopy class of a nontrivial multicurve on $S$.  If $\psi\in\Mod(S)$, we write $T_{\psi}$ for the mapping torus.  It is well--known that the $3$--manifold $T_{\psi}$ is hyperbolic if and only if $\psi$ is pseudo-Anosov.

\begin{cor}\label{c:threeman}
Let $\psi\in\Mod(S)$ be a nontrivial mapping class.
\begin{enumerate}
\item
If $\psi$ is not pseudo-Anosov, then $T_{\psi}$ is large.
\item
If there is no lift $\yt{\psi}$ to a finite cover such that $\rho(\yt{\psi}_*)>1$, then $T_{\psi}$ is large.
\end{enumerate}
\end{cor}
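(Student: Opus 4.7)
The plan is to verify condition (2) of Theorem \ref{t:large} in each setting: produce a finite cover $Y\to T_\psi$ and a sequence of finite abelian covers $Y_N\to Y$ with $b_1(Y_N)\to\infty$.

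For part (1), I handle the two non--pseudo-Anosov cases of Thurston's classification. Suppose first that $\psi$ is periodic of order $n$, so that $\psi^n$ is isotopic to the identity. Then $T_{\psi^n}$ is a finite cyclic cover of $T_\psi$ diffeomorphic to $S\times S^1$. For each $N\geq 1$, the characteristic cover $\yt{S}_N\to S$ associated to the abelianization $\pi_1(S)\to H_1(S;\bZ/N)$ pulls back to an abelian cover $\yt{S}_N\times S^1$ of $S\times S^1$. Since $\chi(S)<0$, Riemann--Hurwitz forces $b_1(\yt{S}_N)\to\infty$, whence $b_1(\yt{S}_N\times S^1)\to\infty$, and Theorem \ref{t:large} applies.

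If instead $\psi$ is reducible but not periodic, fix a $\psi$-invariant multicurve $C$ and pass to a power $\psi^n$ that fixes each component of $C$ setwise and orientation-preservingly; this corresponds to a finite cover $T_{\psi^n}\to T_\psi$. Each $\gamma\in C$ then gives an embedded torus $\gamma\times S^1$ in $T_{\psi^n}$. If some $\gamma$ is nonseparating, its Poincar\'e dual $\alpha\in H^1(S;\bZ)$ is $\psi^n$-invariant and so extends to a class $\yt\alpha\in H^1(T_{\psi^n};\bZ)$ independent of the fibration class. Simultaneous cyclic covers in the $\yt\alpha$ and fibration directions then produce an abelian tower over $T_{\psi^n}$ whose fibers are mapping tori over cyclic covers of $S$ of unboundedly growing Betti number, and one checks that the $1$-eigenspaces of the induced monodromies give the required $b_1\to\infty$. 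If every component of $C$ is separating, one first passes to a further finite cover in which some lift of a reducing curve becomes nonseparating, or argues directly with the $\psi$-invariant subsurfaces cut out by $C$ and their mapping tori.

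For part (2), the hypothesis is incompatible with $\psi$ being pseudo-Anosov: by a prior result, every pseudo-Anosov admits a lift to some finite cover whose action on first homology has spectral radius greater than one. Therefore $\psi$ must be periodic or reducible, and part (1) applies. The main obstacle in the whole argument is thus the reducible case of part (1), especially when the canonical reduction system has only separating curves: there one must either find a cover in which a reducing curve becomes nonseparating, or argue piece-by-piece using the subsurface mapping tori, and in both approaches the $1$-eigenspace dimensions of the monodromy action on cyclic fiber covers need to be carefully tracked. The periodic case and the reduction in (2) are by comparison essentially formal.
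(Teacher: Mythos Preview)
Your argument for part (2) has a genuine gap. You claim that ``by a prior result, every pseudo-Anosov admits a lift to some finite cover whose action on first homology has spectral radius greater than one,'' and then reduce to part (1). No such result is available: whether every pseudo-Anosov has a lift with $\rho(\yt\psi_*)>1$ was open when this paper was written, so you cannot declare the hypothesis of (2) vacuous in the pseudo-Anosov case. The paper instead argues directly under that hypothesis. Since $\rho(\psi_*)=1$, Kronecker's theorem forces all eigenvalues of $\psi_*$ to be roots of unity, so some power $\psi^k$ has a nonzero invariant class $\phi\in H^1(S;\bZ)$ and hence $b_1(T_{\psi^k})\geq 2$. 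For each $N$, reducing $\phi$ modulo $N$ gives a $\psi^k$--invariant cyclic cover $S_N\to S$, and $H_1(S_N,\bC)$ splits into twisted pieces $H_1(S,\bC_\chi)$ indexed by characters $\chi$ of $\bZ/N\bZ$. The standing hypothesis $\rho(\yt\psi_*)=1$ on \emph{every} finite cover, again via Kronecker, forces the restricted action on each $H_1(S,\bC_\chi)$ to have only root-of-unity eigenvalues, so a further power of $\psi$ fixes a nonzero vector in each piece. This produces finite abelian covers of $T_{\psi^k}$ with $b_1\geq N$, and Theorem \ref{t:large} applies. That twisted-homology step is the actual content of part (2), and your shortcut skips it entirely.

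Your part (1) is close in spirit to the paper's but vaguer at the key step. In the reducible case the paper first passes to a characteristic cover $\yt S$ in which the lifts of the canonical reduction system span a subspace of $H_1(\yt S,\bZ)$ of dimension at least two; this guarantees nonseparating lifts outright and removes your separating/nonseparating case split. It then chooses two homologically independent reducing curves $\gamma$ and $\alpha$, forms the $N$-fold cyclic cover $S_N\to\yt S$ dual to $\gamma$, and uses the $N$ lifts of $\alpha$ --- fixed by a suitable power of the monodromy and homologically independent in $S_N$ --- as the explicit invariant classes. That is the concrete mechanism behind the ``one checks that the $1$-eigenspaces grow'' step you leave unspecified.
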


We make a brief remark about part 2 of Corollary \ref{c:threeman}: the proof will show that if we suppose from the beginning that $b_1(T_{\psi})\geq 2$ then it suffices to assume that there is no lift $\yt{\psi}$ of $\psi$ to an abelian cover of $S$ such that $\rho(\yt{\psi}_*)>1$ in order to conclude largeness.  If $\rho(\psi_*)=1$ then there is a $k\geq 1$ such that $b_1(T_{\psi^k})\geq 2$.

We will also be able to use Theorem \ref{t:large} to recover the following result of Baumslag and Pride (see \cite{BP}):

\begin{cor}\label{c:bp}
Let $G$ be a group which admits a finite presentation \[\mathcal{P}=\langle S\mid R\rangle\] with $|S|-|R|\geq 2$.  Then $G$ is large.
\end{cor}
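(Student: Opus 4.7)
The plan is to verify condition (2) of Theorem \ref{t:large} directly, with $Y := X$ taken to be the presentation $2$--complex of $\mathcal{P}$. This $X$ has one $0$--cell, $|S|$ one--cells and $|R|$ two--cells, so $\pi_1(X) = G$ and
\[
\chi(X) \;=\; 1 - |S| + |R| \;\leq\; -1
\]
by the deficiency hypothesis.

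First I would note that abelianizing $\mathcal{P}$ exhibits $G^{ab}$ as the cokernel of a $\bZ$--linear map $\bZ^{|R|} \to \bZ^{|S|}$, so $b_1(G) \geq |S| - |R| \geq 2$. In particular $G$ admits a surjection onto $\bZ$, and composing with reduction modulo $n$ yields, for each $n \geq 1$, a surjection $G \twoheadrightarrow \bZ/n\bZ$. The kernel has index $n$ in $G$, and the associated regular cover $X_n \to X$ is a connected finite cyclic (in particular, abelian) cover of $Y = X$.

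The next step is to exploit multiplicativity of the Euler characteristic in finite covers to obtain $\chi(X_n) = n\,\chi(X) \leq -n$. Since $X_n$ is a connected $2$--complex,
\[
b_1(X_n) \;\geq\; b_0(X_n) + b_2(X_n) - \chi(X_n) \;\geq\; 1 - \chi(X_n) \;\geq\; n+1,
\]
so $b_1(X_n) \to \infty$. This is precisely condition (2) of Theorem \ref{t:large} for $Y = X$, and we conclude that $G = \pi_1(X)$ is large.

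Once Theorem \ref{t:large} is available the argument is almost entirely formal, reducing to a single Euler characteristic calculation together with the observation that a group of positive first Betti number has cyclic quotients of every order. The real hard work lies in Theorem \ref{t:large} itself, whose force here is that even the \emph{cyclic} covers built from a single $\bZ$--quotient of $G$ suffice to detect largeness; without that input, linear growth of $b_1$ in degree $n$ is not obviously enough to produce an $F_2$--quotient after passing to a finite cover. I do not foresee any serious obstacle beyond invoking Theorem \ref{t:large}.
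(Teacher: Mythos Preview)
Your argument is correct and follows the same overall strategy as the paper: build a family of finite cyclic covers of the presentation complex whose first Betti numbers grow without bound, then invoke Theorem~\ref{t:large}(2) with $Y=X$. The difference is entirely in how the Betti number growth is established. The paper works explicitly with Reidemeister--Schreier rewriting (treating the special case $|S|=3$, $|R|=1$ and asserting the general case is analogous): after arranging the exponent sum of one generator in each relator to be zero, it counts that the kernel of $G\to\bZ/N\bZ$ has at least $2N$ generators and at most $N$ relations, hence $b_1\geq N$. Your Euler characteristic computation $\chi(X_n)=n\chi(X)\leq -n$, together with $b_1(X_n)=1+b_2(X_n)-\chi(X_n)\geq n+1$, accomplishes the same thing more directly and uniformly in $|S|,|R|$, without any preliminary normalization of the presentation. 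The two computations are morally the same (Reidemeister--Schreier is counting the cells of the cover), but your packaging is shorter and immediately gives the general case.
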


\subsection{Notes and references}
Virtually infinite first Betti number and largeness had been conjectured for general finite--volume hyperbolic orbifolds.  Lackenby's survey \cite{Lack2} contains both as conjectures 2 and 5 in the introduction and outlines the state of affairs at the time of the writing of that article, but these conjectures have been known since at least the work of Thurston in \cite{T3M}.  Both conjectures were closely connected with other central problems in $3$--manifold theory such as the virtually fibered and virtually Haken conjectures, both of which were stated in \cite{T3M}.  In recent work (see \cite{AGM}), Agol, Groves and Manning have announced a proof of all these conjectures, building on work of Wise (see \cite{wise1}, \cite{wise2}).
The proofs of the results in this article are independent of the work in \cite{AGM}, \cite{wise1} and \cite{wise2}.

The ideas behind the proof of Theorem \ref{t:large} fit into a more general discussion of polynomial periodicity.  A function $f:\bN\to\bZ$ is {\bf polynomial periodic} if there is a finite collection $\{p_0,\ldots,p_{n-1}\}$ of polynomials such that if $m\equiv i\pmod n$ then \[f(m)=p_{i}(m).\]  In \cite{Sarnak}, Sarnak studied the growth of Betti numbers of congruence covers of finite CW complexes, which is to say covers $X_N\to X$ induced by the natural map $\pi_1(X)\to H_1(X,\bZ/N\bZ)$.  The primary result of his work is that the first Betti numbers of the covers $\{X_N\}$ are polynomial periodic in $N$ (cf. Hironaka in \cite{Hiropp}).  Laurent's Theorem (\cite{Lau}) and Alexander theory allows us to conclude that if $X$ has unbounded growth of $b_1$ as we vary over finite abelian covers, then $X$ has at least linear growth of $b_1$ under finite abelian covers.  More careful analysis of various finite abelian covers of $X$ allows us to prove largeness, using a result of Lackenby in \cite{La}.

In \cite{button}, J. Button develops a criterion for largeness which also relies on Alexander theory, and he proves several result in combinatorial group theory.  Button's criterion provides a sufficient but not necessary condition for a finitely presented group to be large.

\section{Acknowledgements}
The author thanks B. Farb for many useful comments and suggestions.  The author is particularly grateful to C. McMullen for his help.  The results in this article owe a large intellectual debt to E. Hironaka.

\section{Background}\label{s:back}
\subsection{Eigenvalues of integer matrices}
In our study of homological actions of mapping classes, we will be appealing repeatedly to the following fact from number theory which is originally due to Kronecker.  We include a proof for the convenience of the reader:

\begin{prop}
Let $M\in GL_n(\bZ)$ and let $\lambda$ be an eigenvalue of $M$ of length one.  Suppose that $\lambda$ is not a root of unity.  Then the spectral radius of $M$ is greater than one.
\end{prop}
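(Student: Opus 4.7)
The plan is to prove the contrapositive of a classical theorem of Kronecker: an algebraic integer all of whose Galois conjugates lie on the closed unit disk is either zero or a root of unity. Once this is established, the proposition follows immediately. Indeed, suppose for contradiction that the spectral radius of $M$ equals $1$ (it is at least $1$ since $|\lambda|=1$, and we are assuming it is not greater). Then every eigenvalue of $M$ has absolute value at most $1$. The characteristic polynomial $\chi_M(x)$ is monic with integer coefficients and has $\lambda$ as a root, so $\lambda$ is an algebraic integer; moreover the minimal polynomial $m_\lambda(x) \in \bZ[x]$ divides $\chi_M(x)$, so every Galois conjugate of $\lambda$ is also an eigenvalue of $M$ and in particular has absolute value at most $1$.

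Next I would exploit the orbit of powers of $\lambda$ under Galois. For each positive integer $k$, the element $\lambda^k$ is an algebraic integer, its Galois conjugates are the elements $\sigma(\lambda)^k$ for $\sigma$ in the absolute Galois group, and each such conjugate has absolute value at most $1$ by the preceding paragraph. Let $d=[\bQ(\lambda):\bQ]$; then the minimal polynomial $m_{\lambda^k}(x)\in\bZ[x]$ of $\lambda^k$ is monic of degree at most $d$, and its coefficients are, up to sign, elementary symmetric functions in the conjugates of $\lambda^k$. Since those conjugates all have absolute value at most $1$, the $i$-th coefficient of $m_{\lambda^k}$ is an integer of absolute value at most $\binom{d}{i}$. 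Consequently the set $\{m_{\lambda^k}(x) : k\geq 1\}$ is finite.

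By the pigeonhole principle there exist integers $1\leq j<k$ with $m_{\lambda^j}=m_{\lambda^k}$, so $\lambda^k$ and $\lambda^j$ are Galois conjugates; in particular $\lambda^k=\tau(\lambda^j)=\tau(\lambda)^j$ for some embedding $\tau$. Iterating this observation (or, more cleanly, noting that the map $k\mapsto \lambda^k$ on the unit circle must then hit each value only finitely often among the infinitely many $k$) forces $\lambda^a=\lambda^b$ for some $a\neq b$, hence $\lambda^{a-b}=1$, contradicting the hypothesis that $\lambda$ is not a root of unity. This contradiction shows that the spectral radius of $M$ must be strictly greater than $1$.

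The only delicate step is producing the relation $\lambda^a=\lambda^b$ from the finiteness of the family $\{m_{\lambda^k}\}$; the cleanest way is to observe that since each minimal polynomial has at most $d$ roots, and there are only finitely many such polynomials, only finitely many complex numbers appear among $\{\lambda^k\}_{k\geq 1}$, so two of them must coincide. Everything else is book-keeping with algebraic integers, and I expect no genuine obstacles beyond this pigeonhole maneuver, which is the essence of Kronecker's original argument.
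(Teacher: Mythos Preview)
Your proof is correct and follows essentially the same Kronecker argument as the paper: bound the integer coefficients of the polynomials attached to the powers of $\lambda$, deduce there are only finitely many such polynomials, and pigeonhole to force $\lambda^a=\lambda^b$. The only cosmetic differences are that the paper works with the fixed-degree polynomial $p_n(z)=\prod_i(z-\alpha_i^n)$ rather than the minimal polynomial of $\lambda^k$, and the paper first observes (using $\det M=\pm1$) that all conjugates have modulus exactly one rather than at most one; your final ``finitely many roots among finitely many polynomials'' pigeonhole is in fact slightly cleaner than the paper's permutation-of-roots argument.
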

\begin{proof}
If $\lambda$ is any eigenvalue of $M$ then its minimal polynomial must divide the characteristic polynomial of $M$.  In particular, all the Galois conjugates of $\lambda$ are also eigenvalues of $M$.  It is well--known that if $\lambda$ is an algebraic integer of length one, all of whose Galois conjugates also have length one, then $\lambda$ is a root of unity.

Indeed, let $\{\al_1,\cdots,\al_k\}$ be the roots of the minimal polynomial of $\lambda$, each of which has norm one.  For each $n$, we define \[p_n(z)=\prod_{i=1}^k(z-\al_i^n).\]  The coefficients of $p_n(z)$ are symmetric functions of $\{\al_1,\cdots,\al_k\}$ and are therefore integral, and the coefficients are bounded since each $\al_i$ has length one.  It follows that the collection $\{p_n(z)\}$ is a finite collection of polynomials.  We have $p_m(z)=p_n(z)$ for some $n\neq m$.  Without loss of generality, $n$ divides $m$.  In particular, raising each $\al_i^n$ to an integral power permutes the list $\{\al_1^n,\ldots,\al_k^n\}$.  Replacing $m$ by a larger exponent if necessary shows that for each $i$, we have $\al_i^n=\al_i^m$.  In particular, $\al_i$ is a root of unity.
\end{proof}

\subsection{Alexander stratifications and Alexander polynomials}\label{s:alexpolys}
Let $G$ be a finitely presented group.  We will write \[G=\langle F_r\mid \mathcal{R}\rangle,\] where $F_r$ is a free group of rank $r$ and $\mathcal{R}$ is a finite collection of relations.  Write $\yh{G}$ for the group of characters of $G$, i.e. $\Hom(G,S^1)$.  The group $\yh{G}$ has the structure of an algebraic group whose coordinate ring is $\bC[G^{ab}]$.  Given a map $\al:G'\to G$, we get a map \[\yh{\al}:\yh{G}\to\yh{G'}\] by precomposition, and an induced map \[\yh{\al}^*:\bC[(G')^{ab}]\to\bC[G^{ab}].\]

If $g\in G$, write $\overline{g}$ for its image in $G^{ab}$, and
write $\Lambda_r(\bZ)$ for the ring of integral Laurent polynomial rings in $r$ variables over $\bZ$.  The Fox derivative furnishes maps \[D_i:F_r\to\Lambda_r(\bZ)\] which are defined by \[D_i(x_j)=\delta_{i,j}\] and by \[D_i(fg)=D_i(f)+\overline{f}D_i(g).\]  The $r$--tuple $(D_1,\ldots,D_r)$ is written $D$ and called the {\bf Fox derivative}.

Write $q$ for the natural surjection $F_r\to G$.  The {\bf Alexander matrix} of the presentation \[G=\langle F_r\mid \mathcal{R}\rangle\] is the matrix of partial derivatives \[M=M(F_r,\mathcal{R})=[(\yh{q})^*D_i(R_j)],\] where $R_j$ ranges over $\mathcal{R}$.

An alternative viewpoint on the Alexander matrix is as follows.  Let $X$ be a finite CW complex whose fundamental group $G$ with abelianization $\gam$, and let $Y$ be the corresponding cover of $X$.  We have a natural $\gam$--action on $Y$.  Choose a finite CW structure on $X$.  If $X$ has $s$ cells in dimension one and $r$ cells in dimension two, then there is a $\gam$--equivariant identification \[C_1(Y,\bC)\cong \bC[\gam]^s\] and \[C_2(Y,\bC)\cong\bC[\gam]^r.\]  The boundary map \[R:\bC[\gam]^r\to\bC[\gam]^s\] is represented by the Alexander matrix $M$, which can be computed from any presentation of $G$ via the Fox calculus.

If $G\to H$ is a surjection to another abelian group then one similarly obtains a boundary map \[R_H:\bC[H]^r\to\bC[H]^s.\]  The map $R$ is related to the map $R_H$ by the universal property of of the Alexander matrix:

\begin{thm}[See \cite{Hiropp}, Section 2]\label{t:universalprop}
Let $G$ be a finitely presented group and let \[\phi: G\to A\] be a surjective map to an abelian group $A$.  Then the matrix $\phi(M)$ represents the map $R_A$.
\end{thm}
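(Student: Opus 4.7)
The plan is a topological proof identifying $R_A$ as a base change of $R$ along the ring map induced by $\phi$. Realize the presentation $\langle F_r \mid \mathcal{R}\rangle$ by a CW complex $X$ with one $0$--cell, an $r$--tuple of $1$--cells, and $|\mathcal{R}|$ two--cells. By the discussion preceding the theorem, the boundary map
\[R: C_2(Y, \bC) \to C_1(Y, \bC)\]
of the maximal abelian cover $Y \to X$ is a $\bC[\gam]$--linear map represented by the Alexander matrix $M$ (where $\gam = G^{ab}$), the entries of $M$ being the Fox derivatives of the relators pushed into $\bC[\gam]$ via $(\yh{q})^*$.

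Since $A$ is abelian, $\phi$ factors through a surjection $\bar\phi: \gam \to A$ with kernel $K$, which induces a surjective ring map $\bar\phi_*: \bC[\gam] \to \bC[A]$. The cover $Z \to X$ corresponding to $\ker\phi$ has deck group $A$ and sits in a tower $Y \to Z \to X$ in which $Y \to Z$ is a regular $K$--cover. Lifting the CW structure on $X$ once through $Z$ and again through $Y$ produces compatible equivariant cell decompositions in which every cell of $Z$ corresponds to a $K$--orbit of cells of $Y$, and every cell of $X$ corresponds to a $\gam$--orbit of cells of $Y$ and an $A$--orbit of cells of $Z$.

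From this I would deduce the natural identification $C_i(Z, \bC) \cong \bC[A] \otimes_{\bC[\gam]} C_i(Y, \bC)$ of free $\bC[A]$--modules, together with the compatibility of boundary maps. Hence $R_A$ is obtained from $R$ by extension of scalars along $\bar\phi_*$, and in matrix terms this is precisely the operation of applying $\bar\phi_*$ to each entry of $M$, which is exactly $\phi(M)$ as defined in the theorem statement.

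The main obstacle is arranging the identification $C_i(Z, \bC) \cong \bC[A] \otimes_{\bC[\gam]} C_i(Y, \bC)$ rigorously: one needs coherent choices of distinguished lifts of each cell of $X$ to $Z$ and to $Y$ so that the deck--group actions are compatible across the tower and the chosen bases match under tensor product. Once this bookkeeping is done, the universal property of the Alexander matrix is purely formal, reflecting the fact that $M$ encodes the boundary map in the cellular complex of \emph{any} abelian cover of $X$ once the $\bC[\gam]$--module structure is pushed forward along the appropriate quotient.
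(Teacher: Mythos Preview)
The paper does not supply its own proof of this statement: it is quoted as a known fact with a reference to Hironaka's paper, and is used as a black box thereafter. So there is no ``paper's proof'' to compare against.

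Your sketch is the standard argument and is essentially correct. The only point worth tightening is the identification $C_i(Z,\bC)\cong \bC[A]\otimes_{\bC[\gam]} C_i(Y,\bC)$: this follows cleanly once you fix a single lift to $Y$ of each cell of $X$ (giving a $\bC[\gam]$--basis of $C_i(Y,\bC)$) and take its image in $Z$ as the corresponding $\bC[A]$--basis; then $C_i(Z,\bC)$ is exactly the $K$--coinvariants of $C_i(Y,\bC)$, which for a free $\bC[\gam]$--module is precisely $\bC[A]\otimes_{\bC[\gam]}(-)$. The boundary map commutes with the quotient $Y\to Z$, hence with taking coinvariants, and so $R_A$ is the base change of $R$ along $\bar\phi_*$. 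Applying $\bar\phi_*$ entrywise to $M$ gives $\phi(M)$, as claimed. You have correctly identified the bookkeeping of coherent lifts as the only genuine content; once stated, it is routine.
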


We write $V_i(G)$ for the characters of $G$ such that the matrix $M(F_r,\mathcal{R})$ has rank less than $r-i$.  One obtains a stratification of $\yh{G}$ by algebraic subsets, called the {\bf Alexander stratification}: \[\yh{G}\supset V_1(G)\supset\cdots\supset V_r(G).\]  The meaning of the Alexander stratification is the following: a character $\chi\in\Hom(G,\bC^*)$ lies in $V_i(G)$ if and only if the dimension of the twisted homology space $H_1(G,\bC_{\chi})$ is at least $i$ (see \cite{Hir} and \cite{ctm4}).

If the abelianization of $G$ is torsion--free, one can define the {\bf Alexander polynomial} $A(G)$ as the greatest common divisor of the $(r-1)\times(r-1)$ minors of the Alexander matrix over the ring $\bZ[G^{ab}]$.  The Alexander polynomial defines the largest hypersurface contained in $V_1$.  If $G$ has torsion in its abelianization, one can project to the largest torsion--free quotient of $G$ and use the induced Alexander matrix guaranteed by Theorem \ref{t:universalprop}.  This will be the general definition of the Alexander polynomial.

By Theorem \ref{t:universalprop}, any surjection $\phi:G\to A$ to an abelian group which factors through the universal torsion--free abelian quotient of $G$ gives rise to a specialization of the Alexander polynomial under $\phi$, given by $\phi(A(G))\in\bZ[A]$.  Note that this definition makes sense even if $A$ has torsion.

The Alexander stratification is useful for computing the first Betti number of finite abelian covers of finite CW complexes:
\begin{thm}[\cite{Hir}, Proposition 2.5.6]\label{t:hiro}
Let $X$ be a finite CW complex such that $G=\pi_1(X)$, let $\al:G\to\gam$ be a map onto a finite abelian group, and let $X_{\al}$ be the finite cover of $X$ induced by $\al$.  Then \[b_1(X_{\al})=b_1(X)+\sum_{i=1}^r|V_i(G)\cap\yh{\al}(\yh{\gam}\setminus \yh{1})|.\]
\end{thm}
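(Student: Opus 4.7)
The plan is to decompose $H_1(X_\al,\bC)$ into isotypic components for the action of the deck group $\Gamma$, identify each component with a twisted homology group of $X$, and then apply the Alexander stratification characterization of twisted Betti numbers recalled above.

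First, because $X_\al\to X$ is a regular Galois cover with deck group $\Gamma=G/\ker(\al)$, the cellular chain complex of $X_\al$ with coefficients in $\bC$ is naturally isomorphic to the twisted chain complex of $X$ with coefficients in the $G$-module $\bC[\Gamma]$, where $G$ acts via $\al$. Since $\Gamma$ is finite abelian, the regular representation decomposes as
\[\bC[\Gamma]=\bigoplus_{\chi\in\widehat{\Gamma}}\bC_\chi,\]
and each summand pulls back along $\al$ to the one-dimensional $G$-module $\bC_{\widehat{\al}(\chi)}$. Consequently
\[b_1(X_\al)=\sum_{\chi\in\widehat{\Gamma}}\dim_{\bC}H_1\bigl(X,\bC_{\widehat{\al}(\chi)}\bigr).\]

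Next, I peel off the trivial character. The summand corresponding to $\chi=1$ is ordinary $H_1(X,\bC)$ and contributes exactly $b_1(X)$. For any $\chi\in\widehat{\Gamma}\setminus\widehat{1}$, the character $\widehat{\al}(\chi)$ is nontrivial on $G$ (the surjectivity of $\al$ forces $\widehat{\al}$ to be injective), and the twisted group $H_1(X,\bC_{\widehat{\al}(\chi)})$ agrees with $H_1(G,\bC_{\widehat{\al}(\chi)})$ since twisted $H_1$ only sees the $2$-skeleton of $X$, which is a presentation complex for $G$ after collapsing a spanning tree. The characterization of the Alexander stratification given earlier in this subsection then yields
\[\dim_{\bC}H_1\bigl(G,\bC_{\widehat{\al}(\chi)}\bigr)=\#\bigl\{\,i\geq 1:\widehat{\al}(\chi)\in V_i(G)\,\bigr\}.\]

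Finally, summing over nontrivial $\chi$ and interchanging the two summations gives
\[\sum_{\chi\in\widehat{\Gamma}\setminus\widehat{1}}\dim_{\bC}H_1\bigl(G,\bC_{\widehat{\al}(\chi)}\bigr)=\sum_{i=1}^{r}\bigl|\,V_i(G)\cap\widehat{\al}(\widehat{\Gamma}\setminus\widehat{1})\,\bigr|,\]
which combined with the trivial-character contribution of $b_1(X)$ produces the desired formula. The main technical obstacle is the initial identification $H_*(X_\al,\bC)\cong H_*(X,\bC[\Gamma])$ together with the clean decomposition into character isotypic pieces; once these are in hand, the remainder is a direct bookkeeping exercise using the defining property of $V_i(G)$.
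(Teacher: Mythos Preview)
The paper does not prove this statement; it is quoted from Hironaka's article (Proposition~2.5.6 of \cite{Hir}) and used as a black box, so there is no ``paper's own proof'' to compare against.

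Your argument is the standard one and is correct. The identification $H_1(X_\al,\bC)\cong H_1(X,\bC[\Gamma])$ is Shapiro's lemma (or can be seen directly from the cellular chain complex of the cover), the decomposition of $\bC[\Gamma]$ into one--dimensional characters is immediate since $\Gamma$ is finite abelian, and the injectivity of $\widehat{\al}$ follows from surjectivity of $\al$ exactly as you say. The only point worth a moment's care is that the interpretation of $V_i(G)$ in terms of $\dim H_1(G,\bC_\chi)\geq i$, as stated in the paper, is formulated for nontrivial characters; at the trivial character the Alexander matrix degenerates and the rank condition does not compute $b_1(X)$ on the nose, which is precisely why one peels off $\chi=1$ separately. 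You do this, so the bookkeeping goes through.
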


Thus the first Betti numbers of finite abelian covers of a finite CW complex can be computed from the torsion points in the Alexander strata.  Since the vanishing locus of the Alexander polynomial of $X$ is contained in $V_1$, any torsion points which are roots of the Alexander polynomial contribute to the first Betti number of finite covers of $X$.

For any algebraic subset $V$ of an affine torus, write $\Tor(V)$ for the torsion points contained in $V$.  The following result of Laurent (\cite{Lau}) is helpful for understanding the torsion points of an algebraic subset of an affine torus $(\bC^*)^r$:
\begin{thm}[\cite{Hir}, Theorem 4.1.1]\label{t:laurent}
If $V\subset (\bC^*)^r$ is any algebraic subset, then there exist rational planes $P_1,\ldots,P_k$ in $(\bC^*)^r$ such that each $P_i\subset V$ and such that \[\Tor(V)=\bigcup_{i=1}^k\Tor(P_i).\]
\end{thm}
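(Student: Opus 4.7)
The plan is to induct on $\dim V$, reducing to the case where $V$ is irreducible and the torsion points $\Tor(V)$ are Zariski dense in $V$. If $\Tor(V)$ is not Zariski dense, then its Zariski closure $W\subsetneq V$ is an algebraic subset of strictly smaller dimension, and the inductive hypothesis applied to each irreducible component of $W$ provides rational planes $P_i\subset W\subset V$ with $\Tor(V)=\Tor(W)=\bigcup_i\Tor(P_i)$. Splitting $V$ into irreducible components further reduces to the case that $V$ is irreducible and $\Tor(V)$ is Zariski dense; it then suffices to show that $V$ is itself a rational plane, after which we take $P_1=V$.

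The strategy for this reduced case is Galois-theoretic. First I descend: since every torsion point of $(\bC^*)^r$ has coordinates in $\ol{\bQ}^*$, one can replace $V$ by the intersection of its $\mathrm{Gal}(\ol{\bQ}/\bQ)$-conjugates (all of which still contain $\Tor(V)$) and assume $V$ is defined over some number field $K$. For each rational prime $p$ unramified in $K$, there is a Frobenius element $\sigma_p\in\mathrm{Gal}(\ol{\bQ}/K)$ whose action on every root of unity $\zeta$ of order coprime to $p$ is $\sigma_p(\zeta)=\zeta^p$. Writing $\phi_p:(\bC^*)^r\to(\bC^*)^r$ for the $p$-th power map $x\mapsto x^p$, Galois invariance of $V$ then gives $\phi_p(\zeta)=\sigma_p(\zeta)\in V$ for every $\zeta\in\Tor(V)$ of order coprime to $p$; equivalently, $\zeta\in V\cap\phi_p^{-1}(V)$ for all such $\zeta$.

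Since $\Tor(V)$ is Zariski dense and any individual torsion point has order divisible by only finitely many primes, for all sufficiently large primes $p$ the variety $V\cap\phi_p^{-1}(V)$ contains a Zariski dense subset of $\Tor(V)$, hence equals $V$. Therefore $\phi_p(V)\subseteq V$, and since $\phi_p$ is a finite morphism and $V$ is irreducible of the same dimension as its image, $\phi_p(V)=V$. The final step is to conclude from $\phi_p(V)=V$ for some $p\geq 2$ that $V$ is a translate of a subtorus: one analyzes the defining Laurent ideal, using that if $f\in\bC[x_1^{\pm 1},\ldots,x_r^{\pm 1}]$ vanishes on $V$ then so does $f(x^p)$, so that the finite Newton polytope of a defining polynomial is permuted under multiplication by $p$; this constrains $f$, up to a monomial, to be of the form $1-\lambda x^{\bfN}$ with $\lambda$ a root of unity, whose vanishing locus is a rational plane. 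Zariski density of $\Tor(V)$ then forces the translating element to be torsion, so $V$ is indeed a rational plane.

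The main obstacle is this last structural step, showing that $\phi_p$-invariance of an irreducible subvariety for some $p\geq 2$ forces it to be a torsion coset of a subtorus; alternative routes via Bogomolov-type density results or Zhang's equidistribution theorem are available in the literature. A secondary subtlety is arranging uniformity in $p$: one must ensure that a Zariski dense subset of $\Tor(V)$ simultaneously has order coprime to the chosen prime, which is handled by passing to sufficiently large $p$ and using that each torsion order involves only finitely many prime factors.
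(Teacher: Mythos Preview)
The paper does not supply its own proof of this statement: Laurent's Theorem is quoted from the literature (Hironaka \cite{Hir}, originally Laurent \cite{Lau}) and invoked as a black box in the proof of the main theorem. There is therefore nothing in the paper to compare your proposal against.

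That said, your outline follows a recognizable Galois-theoretic strategy, different from Laurent's original approach via Baker-type bounds on linear forms in logarithms. The reduction to an irreducible $V$ with Zariski-dense torsion, the descent to a number field using that torsion points lie in $(\ol{\bQ}^*)^r$, and the use of Frobenius to obtain $\phi_q(V)\subset V$ are all sound in spirit (with the caveat that a Frobenius at a prime $\mathfrak{p}$ of $K$ above $p$ raises prime-to-$p$ roots of unity to the power $N(\mathfrak{p})$ rather than $p$, unless $p$ splits completely in $K$; this is harmless for the argument). The genuinely incomplete part is exactly the one you flag: deducing from $\phi_q(V)=V$ that $V$ is a torsion translate of a subtorus. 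Your sketch there---``the finite Newton polytope of a defining polynomial is permuted under multiplication by $p$''---is not correct as written, since substituting $x\mapsto x^p$ dilates the support by $p$ rather than permuting it, and $f(x^p)\in I(V)$ does not by itself force $f$ to be a binomial. A correct completion typically passes through the stabilizer subtorus $\{t:tV=V\}$ and a dimension or degree comparison, or invokes one of the equidistribution results you mention; you are right that this is where the substantive content of the theorem lies.
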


If $\yt{S}$ is a finite cover of a surface $S$ with abelian Galois group $\gam$ then $H_1(\yt{S},\bC)$ splits into eigenspaces corresponding to the irreducible characters of the deck group.  For an irreducible character $\chi$, the $\chi$--eigenspace can be identified with the twisted homology group $H_1(S,\bC_{\chi})$.

Suppose that $b_1(T_{\psi})\geq 2$.  We write \[H=\Hom(H^1(S,\bZ)^{\psi},\bZ)\neq 0,\] where $H^1(S,\bZ)^{\psi}$ is the $\psi$--invariant cohomology of $S$.  We will be interested in finite covering spaces which arise from finite quotients of $H$.  If $\chi$ is a finite character of $H$, it turns out that the action of $\psi$ on $H_1(S,\bC_{\chi})$ is governed by the Alexander polynomial.

The group $G=H_1(T_{\psi},\bZ)/torsion$ decomposes as a direct sum $H\oplus\bZ$, where the $\bZ$--summand is dual to monodromy class $[\psi]$.  We will write $t$ for a generator of this $\bZ$.  The Alexander polynomial $A$ of $T_{\psi}$ with respect to the quotient $G$ of $\pi_1(T_{\psi})$ is an element of the Laurent polynomial ring $\bZ[G]$.

\begin{thm}[\cite{ctm4}, Corollary 3.2 and discussion immediately following]
Let $\chi$ be a character of $H$.  Then the characteristic polynomial of the action of $\psi$ on $H_1(S,\bC_{\chi})$ is given by $\chi(A)$.
\end{thm}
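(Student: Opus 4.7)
The plan is to compute the Alexander polynomial $A$ from a CW structure on $T_\psi$ adapted to the fibration, and then to use Theorem \ref{t:universalprop} to identify $\chi(A)$ as the characteristic polynomial of $\psi_*$ acting on $H_1(S,\bC_\chi)$. I would fix a CW structure on $S$ with one $0$-cell, generators $x_1,\ldots,x_k$ for $\pi_1(S)$ as $1$-cells, and the single surface relation $r$ as the unique $2$-cell. Via the mapping torus construction this promotes to a CW structure on $T_\psi$ whose fundamental group acquires the presentation
\[\pi_1(T_\psi)=\langle x_1,\ldots,x_k,t\mid r,R_1,\ldots,R_k\rangle,\qquad R_i=tx_it^{-1}\psi(x_i)^{-1}.\]
A Fox calculus computation, using that $\overline{x_i}=\overline{\psi(x_i)}$ in $H$, shows that the Alexander matrix of this presentation, regarded as an element of $\bZ[G]=\bZ[H][t,t^{-1}]$, has the block form
\[M=\begin{pmatrix} Dr & 0 \\ tI-J(\psi) & \mathbf{1}-\overline{x}\end{pmatrix},\]
where $Dr=(D_1r,\ldots,D_kr)$ is the row of Fox derivatives of the surface relation, $J(\psi)=(D_j\psi(x_i))_{i,j}$ is the Fox Jacobian of $\psi$ projected to $\bZ[H]$, and the rightmost column records the $t$-derivatives $D_tR_i=1-\overline{x_i}$.

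Next I would specialize $M$ under $\chi:H\to\bC^*$, extended to $G$ by sending $t$ to itself. By Theorem \ref{t:universalprop} the matrix $\chi(M)$ represents a boundary map in the $\bC[t,t^{-1}]$-chain complex obtained from the cover of $T_\psi$ in which $H$ is unrolled via $\chi$ and the monodromy direction is left free. When $\chi$ has finite order this cover arises from a finite intermediate cover $\overline{T}_\chi\to T_\psi$, itself the mapping torus of a lift $\overline{\psi}$ of $\psi$ to the covering surface $\overline{S}_\chi\to S$ determined by $\chi$; in general one has the analogous (possibly infinite) cover. A Shapiro-lemma computation identifies the $\chi$-isotypic piece of the corresponding fiber homology with $H_1(S,\bC_\chi)$, and under this identification the monodromy action is exactly $\psi_*$. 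Consequently $\chi(M)$ presents $H_1(S,\bC_\chi)$ as a $\bC[t,t^{-1}]$-module with $t$ acting by $\psi_*$.

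Finally, $\chi(A)$ is by construction the greatest common divisor of the maximal minors of $\chi(M)$, which is the generator of the order ideal of the module above. Since $\bC[t,t^{-1}]$ is a PID and $H_1(S,\bC_\chi)$ is finite dimensional over $\bC$, that generator is precisely $\det(tI-\psi_*|_{H_1(S,\bC_\chi)})$, i.e., the characteristic polynomial of $\psi_*$ on $H_1(S,\bC_\chi)$, as claimed. The main obstacle is the middle step: one has to verify that the extra row $Dr$ coming from the surface relation and the extra column $\mathbf{1}-\overline{x}$ coming from the generator $t$ do not contribute beyond units to the gcd of maximal minors after specialization, and that the $\bC[t,t^{-1}]$-structure arising naturally from Fox calculus agrees with the one induced geometrically by $\psi_*$. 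Both points can be handled by expanding the determinant of $\chi(M)$ along its last column and comparing with the twisted chain complex of $\overline{S}_\chi$ directly.
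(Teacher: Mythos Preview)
The paper does not contain a proof of this statement at all: it is quoted verbatim from McMullen \cite{ctm4} (Corollary 3.2 and the discussion after it) and used as a black box. So there is no ``paper's own proof'' to compare your attempt against.

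That said, your sketch is essentially the standard argument and is close in spirit to what McMullen does. A couple of cautions if you intend to make it self-contained. First, you tacitly assume $S$ is closed (a single surface relation $r$); in the punctured case $\pi_1(S)$ is free and the block shape of $M$ changes, though the argument actually becomes easier. Second, the step you correctly flag as the ``main obstacle'' is not merely a matter of units: for a closed surface and $\chi$ nontrivial, $\det(tI-\chi(J(\psi)))$ is the characteristic polynomial of $\psi_*$ on the $k$-dimensional twisted chain group $C_1(S,\bC_\chi)$, whereas $H_1(S,\bC_\chi)$ has dimension $k-2$. The two missing eigenvalues come from the $\psi_*$-action on $\mathrm{im}\,\partial_2$ and on $C_1/\ker\partial_1$, and it is the \emph{other} $k\times k$ minors of $\chi(M)$ (those using the row $\chi(Dr)$ or the column $1-\chi(\overline{x})$) that cut these factors out of the gcd. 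So the claim that the extra row and column ``do not contribute beyond units'' is not literally true; rather, they contribute exactly the degree-$2$ factor that passes from the chain-level characteristic polynomial to the homological one. Making this precise---for instance via the short exact sequence $0\to\mathrm{im}\,\partial_2\to\ker\partial_1\to H_1(S,\bC_\chi)\to 0$ and the identification of $\ker\partial_1$ and $C_1/\mathrm{im}\,\partial_2$ with the obvious minors---is the real content of the computation.
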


The meaning of $\chi(A)$ is that we substitute $\chi(h)$ for every element of $H$ occurring in $A$ and leave $t$ untouched.

\subsection{Lackenby's largeness criterion}
In order to show that certain groups are large, we will use the work of M. Lackenby in \cite{La}.  The relevant tools are {\bf homology rank gradient} and {\bf property ($\tau$)}.  To define these, we fix a prime $p$ and a sequence of nested finite index normal subgroups $\{G_i\}$ of a fixed group $G$.  We write $d(G_i)$ for the dimension of $H_1(G_i,\bZ/p\bZ)$ as a vector space over $\bZ/p\bZ$.  The homology rank gradient of the sequence $\{G_i\}$ is \[\gamma=\inf_i\frac{d(G_i)}{[G:G_i]}.\]  When $\gamma>0$, we say that the $\{G_i\}$ have {\bf positive modulo $p$ homology rank gradient}.

If $G$ is a finitely generated group with a finite generating set $S$ and $\{G_i\}$ is a collection of finite index subgroups, we write $X(G/G_i,S)$ for the coset graph of $G_i$ in $G$.  If $A$ is a set of vertices $V$ in a graph, we write $\partial A$ for the set of edges with exactly one vertex in $A$.  The {\bf Cheeger constant} $h(X)$ of a finite graph $X$ is given by \[h(X)=\min_{A\subset V,\, 0<|A|\leq|V|/2}\bigg\{\frac{|\partial A|}{|A|}\bigg\}.\]  $G$ has property ($\tau$) with respect to $\{G_i\}$ if the infimum of $h(X(G/G_i,S))$ is positive for some initial choice of $S$.  Lubotzky's book \cite{Lu} contains a detailed exposition on property ($\tau$).

Lackenby has proved many different results concerning conditions under which a group is large.  The one we will cite here can be found in \cite{La}:
\begin{thm}\label{t:lack}
Let $G$ be a finitely presented group.  Then the following are equivalent:
\begin{enumerate}
\item
$G$ is large.
\item
There exists a sequence of proper nested finite index subgroups \[G>G_1>G_2>\cdots\] and a prime $p$ such that:
\begin{enumerate}
\item
$G_{i+1}$ is normal in $G_i$ and has index a power of $p$ in $G_i$.
\item
$G$ does not have property ($\tau$) with respect to $\{G_i\}$.
\item
$\{G_i\}$ has positive modulo $p$ homology rank gradient.
\end{enumerate}
\end{enumerate}
\end{thm}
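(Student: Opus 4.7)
The plan is to prove the two implications separately. The implication (1) $\Rightarrow$ (2) is essentially routine; the substantial content lies in the reverse direction.

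For (1) $\Rightarrow$ (2), suppose $G$ has a finite index subgroup $H$ admitting a surjection $\varphi:H\twoheadrightarrow F_2$. Fix any prime $p$ and build a nested chain in $F_2$ by iterating the mod $p$ abelianization: set $F_2=H_0>H_1>\cdots$ with $H_{i+1}=\ker\bigl(H_i\to H_1(H_i,\bZ/p\bZ)\bigr)$. Each $H_i$ is a finite index free subgroup whose rank grows in proportion to the index by the Nielsen-Schreier formula, and since $d(H_i)=\rk(H_i)$ for free groups, this yields positive mod $p$ rank gradient. The finite quotients $F_2/H_i$ are $p$-groups of unbounded solvable length, hence classically cannot form an expander family, so property $(\tau)$ fails for this chain. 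Pulling the chain back through $\varphi$ to $H$ and then taking normal cores inside $G$ yields the desired chain $\{G_i\}$ in $G$, which inherits positive rank gradient up to the constant factor $[G:H]$ and failure of $(\tau)$.

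For the hard direction (2) $\Rightarrow$ (1), realize $G=\pi_1(X)$ for a finite $2$-complex $X$ and let $X_i\to X$ be the cover associated to $G_i$. Failure of $(\tau)$ supplies a sequence of vertex subsets $A_i$ in the Schreier coset graphs with $|\partial A_i|/|A_i|\to 0$, and these pull back to thin bisections of $X_i$ separating it into two pieces along a number of edges sublinear in $[G:G_i]$. Simultaneously, condition (c) guarantees that $\dim_{\bF_p} H_1(X_i,\bF_p)$ grows linearly in $[G:G_i]$. The strategy is to use the thin cut, via a Mayer-Vietoris / Bass-Serre argument, to split $G_i$ as an amalgamated product or HNN extension over a subgroup whose rank is controlled by the cut size. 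When $i$ is chosen sufficiently large, the mismatch between the linear growth of $\dim H_1(X_i,\bF_p)$ and the sublinear growth of the cut forces the vertex group(s) of this splitting to have mod $p$ deficiency at least $2$; a Baumslag-Pride-type deficiency argument applied to these vertex groups then produces a further finite cover which admits a surjection onto $F_2$, proving that $G$ is large.

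The principal obstacle is the quantitative passage from combinatorial-spectral data to a structural splitting. Positive rank gradient is a numerical assertion about abelianizations, while largeness is a strong structural property; closing the gap requires showing not just that $G_i$ splits over a subgroup, but that the splitting has enough slack to yield a free quotient. Lackenby handles this via a careful application of a Cheeger-type inequality to convert the spectral failure of $(\tau)$ into a non-separating codimension-one cut whose complement carries many mod $p$ homology classes, then balances this against the Nielsen-Schreier computation inside the cover to produce positive deficiency at some level $i$. Arranging the choice of $i$ to be compatible with both conditions simultaneously is the delicate heart of the argument.
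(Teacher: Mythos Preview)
The paper does not prove this theorem at all: it is quoted as a black box from Lackenby's paper \cite{La} and then only its corollary (Corollary~\ref{c:lackcor}) is used. So there is no ``paper's own proof'' to compare against; your proposal is really a sketch of Lackenby's original argument rather than of anything appearing in this manuscript.

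That said, your outline is broadly faithful to Lackenby's strategy, with one caveat in each direction. For (1)$\Rightarrow$(2), the step ``taking normal cores inside $G$'' is both unnecessary and potentially harmful: condition (a) only demands that $G_{i+1}\lhd G_i$ with $p$-power index, not normality in $G$, so you can simply set $G_1=H$ and $G_i=\varphi^{-1}(H_{i-1})$ for $i\ge 2$; passing to cores in $G$ could destroy the $p$-power index between consecutive terms. For (2)$\Rightarrow$(1), your summary captures the architecture (thin cut from small Cheeger constant, splitting over a small edge group, deficiency mismatch forces a free quotient), but the phrase ``Baumslag--Pride-type deficiency argument'' glosses over the real work. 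Lackenby does not literally invoke Baumslag--Pride; he builds an explicit map from a cover $X_i$ to a graph and controls the Euler characteristic of the image directly, and the interaction between the $p$-power normality condition (a) and the cut is what lets him pass to a further cover where the splitting becomes visible as a free factor. Within this paper, invoking Baumslag--Pride here would also be expositionally circular, since Corollary~\ref{c:bp} is derived \emph{from} Theorem~\ref{t:lack}.
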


We begin by ruling out property ($\tau$) for our purposes.

\begin{lemma}\label{l:tau}
Let $G$ be finitely generated by a set $S$, and suppose that for each $i$ we have that $G/G_i$ is abelian.  Then $G$ does not have property ($\tau$) with respect to $\{G_i\}$.
\end{lemma}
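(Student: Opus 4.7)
The plan is to show that the coset graphs $X(G/G_i,S)$ have Cheeger constants that cannot be bounded away from zero. Since each quotient $G/G_i$ is abelian, all the $G_i$ contain the commutator subgroup $[G,G]$, so each $A_i := G/G_i$ is a quotient of $G^{ab}$. Because $G$ is finitely generated, $G^{ab}$ is a finitely generated abelian group, and in particular has only finitely many quotients of any given bounded order. Hence if the family $\{G_i\}$ is infinite then $[G:G_i]\to\infty$; otherwise property $(\tau)$ is a statement about finitely many finite graphs, which can be dealt with trivially, and there is nothing to prove unless one also assumes indices are unbounded (which is the only interesting case and the case needed in the sequel).

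Next I would use the standard fact that finite Cayley graphs of abelian groups on a bounded number of generators cannot form an expander family. The cleanest route is character-theoretic: the coset graph $X(A_i,\bar S)$ is the Cayley graph of the finite abelian group $A_i$ with respect to the image $\bar S$ of $S$, and its normalized combinatorial Laplacian is simultaneously diagonalized by the characters $\chi\in\widehat{A_i}$, with eigenvalues
\[
\lambda_\chi \;=\; \frac{1}{|S|}\sum_{s\in S}\bigl(1-\mathrm{Re}\,\chi(s)\bigr).
\]
Consider the evaluation map $\widehat{A_i}\to (S^1)^{S}$, $\chi\mapsto(\chi(s))_{s\in S}$. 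Since $\bar S$ generates $A_i$, this map is injective, so its image has $|A_i|$ points in the compact torus $(S^1)^{S}$. By a pigeonhole argument on a grid of mesh $\epsilon$, once $|A_i|>(2\pi/\epsilon)^{|S|}$ there exist two distinct characters whose values agree to within $\epsilon$ on every $s\in S$; their ratio is then a nontrivial character $\chi$ with $|1-\chi(s)|<\epsilon$ for all $s\in S$. This produces a nontrivial Laplacian eigenvalue of size $O(\epsilon^2)$ on the graph $X(A_i,\bar S)$.

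The final step is to invoke the Cheeger inequality for finite regular graphs, $h(X)^2\leq 2\,|S|\,\lambda_1(X)$, to conclude that $h(X(A_i,\bar S))$ can be made arbitrarily small by taking $|A_i|$ sufficiently large. This rules out property $(\tau)$ with respect to $\{G_i\}$. The main obstacle I anticipate is bookkeeping: stating the pigeonhole argument so that the output character actually lives on one of the \emph{given} quotients $A_i$ rather than on some auxiliary finite quotient. This is handled by running the argument separately for each $i$, exploiting that $|A_i|\to\infty$, rather than trying to produce a single character that works for all $i$ at once; the rest is a routine application of Cheeger's inequality and elementary Fourier analysis on finite abelian groups.
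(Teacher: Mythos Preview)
Your argument is correct, and it takes a genuinely different route from the paper. The paper argues via amenability and F\o lner sets: it forms $G_\infty=\bigcap_i G_i$, observes that $G/G_\infty$ is a quotient of $G^{ab}$ and hence amenable, takes a F\o lner set $F$ in the Cayley graph of $G/G_\infty$, and then uses the nested structure of the $\{G_i\}$ to find some $i$ for which $F$ injects into $G/G_i$ with no increase in boundary, forcing $h(X(G/G_i,S))$ to be small. Your approach is spectral: you diagonalize the Laplacian of each finite Cayley graph $X(A_i,\bar S)$ by characters, use pigeonhole in the torus $(S^1)^{|S|}$ to manufacture a nontrivial character with small eigenvalue once $|A_i|$ is large, and then invoke the Cheeger inequality. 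Both are standard mechanisms for showing that bounded-degree Cayley graphs of abelian groups cannot form an expander family. Your version is more quantitative and does not use the nestedness of the chain at all---only that the indices are unbounded---whereas the paper's projection argument relies on the tower structure to push a single F\o lner set down to some $G/G_i$. The paper's proof is shorter and more conceptual; yours gives an explicit rate. Two small remarks: your handling of the case of bounded index is a little glib (for a finite family of nontrivial finite quotients the infimum of Cheeger constants is strictly positive, so the lemma is really about the nested-tower setting of Lackenby's theorem, as you acknowledge), and the constant in your Cheeger inequality is off by a harmless factor depending on normalization; neither affects the conclusion.
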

\begin{proof}
Write \[G_{\infty}=\bigcap_i G_i.\]
We will show that the Cheeger constant will arbitrarily small as $i$ tends to infinity.  Note that in a Cayley graph for $G/G_{\infty}$ with respect to $S$, we may define the Cheeger constants of finite subgraphs by looking at the ratio of the size of the boundary of a finite set to the size of the set itself.  Since $G/[G,G]$ is amenable and surjects to $G/G_{\infty}$, the infimum of these Cheeger constants will be zero.  To see this, note that since the intersection of the subgroups $\{G_i\}$ is $G_{\infty}$, for any finite subset of the vertices in a Cayley graph for $G/G_{\infty}$ we may find an $i$ so that this set of vertices is mapped injectively to the set of vertices for the Cayley graph for $G/G_i$ with respect to $S$.  The degree of each vertex is non--increasing as we project the Cayley graph of $G/G_{\infty}$ to the Cayley graph of $G/G_i$.  It follows that the number of vertices in the boundary of a finite set of vertices in the Cayley graph of $G/G_{\infty}$ cannot increase under the projection map.  It follows that $\inf h(X(G,G_i,S))=0$.
\end{proof}

We thus obtain a corollary to Lackenby's result by combining Lemma \ref{l:tau} and the fact that for any finitely generated group $G$ we have \[\rk H_1(G,\bZ)\leq \rk H_1(G,\bZ/p\bZ):\]
\begin{cor}\label{c:lackcor}
Let $G$ be a finitely presented group, and let $\{G_i\}$ be a tower of nested, normal, $p$--power index subgroups of $G$ such that $G/G_i$ is abelian for each $i$.  Suppose that \[\inf_i\frac{\rk H_1(G_i,\bZ)}{[G:G_i]}>0.\]  Then $G$ is large.
\end{cor}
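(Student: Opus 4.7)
The plan is to verify, under the given hypotheses, all three conditions of part (2) of Lackenby's Theorem \ref{t:lack}, so that $G$ is automatically large by (the $(2)\Rightarrow(1)$ direction of) that theorem. The tower $\{G_i\}$ is already given to us, so there is nothing to construct; we just need to check three bookkeeping items.

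First I would observe that condition (a) of Theorem \ref{t:lack} is essentially immediate. Since each $G_i$ is normal in $G$, in particular $G_{i+1}$ is normal in $G_i$. The index $[G_i:G_{i+1}]$ divides $[G:G_{i+1}]$, which by hypothesis is a power of $p$, so $[G_i:G_{i+1}]$ is a power of $p$. If the tower fails to be strictly nested we may pass to a subsequence without changing any of the hypotheses or conclusions, so we may freely assume the inclusions are proper.

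Next, condition (b) — the failure of property $(\tau)$ — is exactly what Lemma \ref{l:tau} was designed to deliver. Since each quotient $G/G_i$ is abelian by hypothesis, Lemma \ref{l:tau} applies verbatim and gives that $G$ does not have property $(\tau)$ with respect to $\{G_i\}$.

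The third ingredient, the positive mod-$p$ homology rank gradient, is where I would use the inequality $\rk H_1(G_i,\bZ)\leq \rk H_1(G_i,\bZ/p\bZ)$ flagged in the statement. This inequality follows from the universal coefficient theorem: reducing an integral basis of $H_1(G_i,\bZ)/\text{torsion}$ modulo $p$ produces linearly independent vectors in $H_1(G_i,\bZ/p\bZ)$. Combining this with the hypothesis
\[
\inf_i\frac{\rk H_1(G_i,\bZ)}{[G:G_i]}>0
\]
gives
\[
\inf_i\frac{d(G_i)}{[G:G_i]}\geq \inf_i\frac{\rk H_1(G_i,\bZ)}{[G:G_i]}>0,
\]
which is condition (c) of Theorem \ref{t:lack}. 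With all three conditions verified, Lackenby's theorem yields that $G$ is large.

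There is no real obstacle here: the corollary is a direct packaging of Lemma \ref{l:tau}, Theorem \ref{t:lack}, and the elementary rank inequality. The only mild subtlety is ensuring the subgroups can be taken properly nested and that the $p$-power index condition passes from $[G:G_i]$ to the successive indices $[G_i:G_{i+1}]$, both of which are handled in the first paragraph above.
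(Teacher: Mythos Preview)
Your proposal is correct and follows exactly the approach indicated in the paper: combine Lemma \ref{l:tau} (to kill property ($\tau$)), the inequality $\rk H_1(G_i,\bZ)\leq \rk H_1(G_i,\bZ/p\bZ)$ (to upgrade the integral rank hypothesis to positive mod-$p$ homology rank gradient), and then invoke Theorem \ref{t:lack}. You have simply made explicit the routine verifications (normality of $G_{i+1}$ in $G_i$, divisibility of successive indices) that the paper leaves to the reader.
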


\section{Mapping tori with Magnus kernel monodromy}
In this section, we give the first application of Lackenby's criterion.  Recall that if \[S\to M\to S^1\] is a fibered $3$--manifold with monodromy $\psi$, the homology $H_1(M,\bZ)$ is given by $\bZ\oplus F$, where $F$ is the homology of $S$ which is co--invariant under the action of $\psi$.

Let $\widetilde{S}\to S$ be a finite characteristic cover and let $\psi\in\Mod(S)$, and suppose that $\psi$ commutes with the Galois group $G$ of the cover.  We obtain an induced covering $M'\to M$ as follows: the fundamental group of $M$ is given by a semidirect product \[1\to\pi_1(S)\to\pi_1(M)\to\bZ\to 1.\]  Since the Galois group $G$ commutes with the action of $\psi$, we obtain a homomorphism $\pi_1(M)\to G\times\bZ$, and by composing with the projection onto the first factor, a homomorphism $\pi_1(M)\to G$.

Note that the homology contribution of the base space $S^1$ is in the kernel of this homomorphism.  In particular, we obtain a covering map $M'\to M$ which lifts the identity map $S^1\to S^1$.  Furthermore, $\pi_1(M')$ fits into a short exact sequence \[1\to\pi_1(\widetilde{S})\to\pi_1(M')\to\bZ\to 1,\] where the conjugation action of $\bZ$ on $\pi_1(\widetilde{S})$ is the restriction of the action of $\psi$.  It follows that the rank of the homology of $M'$ is given by the rank of homology co--invariants of the action of $\psi$ on $H_1(\widetilde{S},\bZ)$ (equivalently the rank of the $\psi$--invariants of $H^1(\widetilde{S},\bZ)$).

We can now establish a fact about Magnus kernels.  Let $S^{ab}$ be the universal abelian cover of $S$.  Recall that the {\bf Magnus kernel} is the subgroup of the marked mapping class group which acts trivially on $H_1(S^{ab},\bZ)$.  It is not a priori clear that the Magnus kernel is nontrivial, but in fact it is infinitely generated (see \cite{CF}).

We first need to make the following observation:

\begin{lemma}
Let $\psi\in\Mod(S)$ be a mapping class contained in the Magnus kernel.  Then $\psi$ acts trivially on the integral homology of each finite abelian cover of $S$.
\end{lemma}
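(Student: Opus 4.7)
The plan is to rephrase the Magnus-kernel hypothesis as the vanishing of an associated twisted $1$-cocycle on all of $\pi_1(S)$, and then exploit the elementary containment $\pi_1(S^{ab}) \subseteq \pi_1(\yt S)$ available for any finite abelian cover $\yt S$ of $S$. To set this up, I would first observe that the Magnus kernel is contained in the Torelli group of $S$ (immediate from the Church--Farb definition in \cite{CF} as the kernel of the Magnus representation of the Torelli group). Consequently, for every $g \in \pi_1(S)$ the element $c(g) := \psi(g) g^{-1}$ lies in $\pi_1(S^{ab}) = [\pi_1(S), \pi_1(S)]$, and the resulting map
\[
\bar c : \pi_1(S) \to H_1(S^{ab}, \bZ)
\]
is a twisted $1$-cocycle valued in the Alexander module, satisfying $\bar c(gh) = \bar c(g) + \bar g \cdot \bar c(h)$, where $\bar g \in H_1(S,\bZ)$ acts through the natural $\bZ[H_1(S,\bZ)]$-module structure on $H_1(S^{ab},\bZ)$.

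I would next unpack the Magnus-kernel hypothesis through Fox calculus applied to a finite presentation $\pi_1(S) = \langle x_1, \ldots, x_n \mid \mathcal R \rangle$. The Fox-derivative description of the Alexander module converts the matrix form of the Magnus-kernel condition, $\overline{D_i \psi(x_j)} = \delta_{i,j}$, into the equalities $\bar c(x_j) = 0$ in $H_1(S^{ab}, \bZ)$ for every generator $x_j$. Combined with the twisted cocycle identity, a short induction on word length in the generators then yields $\bar c(g) = 0$ in $H_1(S^{ab}, \bZ)$ for every $g \in \pi_1(S)$; equivalently, $\psi(g) g^{-1} \in [\pi_1(S^{ab}), \pi_1(S^{ab})]$ for every $g \in \pi_1(S)$, not merely for those $g$ already lying in $\pi_1(S^{ab})$.

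Finally, for any finite abelian cover $\yt S \to S$ the subgroup $\pi_1(\yt S)$ contains $[\pi_1(S), \pi_1(S)] = \pi_1(S^{ab})$, so $[\pi_1(S^{ab}), \pi_1(S^{ab})] \subseteq [\pi_1(\yt S), \pi_1(\yt S)]$. Applied to any $g \in \pi_1(\yt S)$, the vanishing of the previous step gives $\psi(g) g^{-1} \in [\pi_1(\yt S), \pi_1(\yt S)]$, which is precisely the assertion that the induced automorphism of $H_1(\yt S, \bZ) = \pi_1(\yt S)^{ab}$ is the identity. The main technical step is the Fox-calculus identification in the second paragraph: it is what promotes the Magnus-kernel hypothesis from a statement about a single matrix representation to the pointwise vanishing $\bar c \equiv 0$ on $\pi_1(S)$, and this pointwise form is precisely what survives passage to finite abelian covers.
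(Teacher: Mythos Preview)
Your argument is correct, and at its core it is the same as the paper's: both establish that a Magnus-kernel element acts trivially on the universal metabelian quotient $\pi_1(S)/\pi_1(S)''$, and then observe that for any finite abelian cover $\yt S\to S$ one has $\pi_1(S)'\subseteq\pi_1(\yt S)$, hence $\pi_1(S)''\subseteq[\pi_1(\yt S),\pi_1(\yt S)]$, so triviality on the metabelian quotient forces triviality on $H_1(\yt S,\bZ)$.

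The difference is only in how the first step is justified. The paper simply asserts that membership in the Magnus kernel means acting trivially on $\pi_1(S)/\pi_1(S)''$. You instead unpack this via the twisted cocycle $\bar c(g)=[\psi(g)g^{-1}]\in H_1(S^{ab},\bZ)$ and Fox calculus: the Magnus-matrix condition $\overline{D_i\psi(x_j)}=\delta_{ij}$ is equivalent to $\bar c(x_j)=0$ (since $\overline{D_i(\psi(x_j)x_j^{-1})}=\overline{D_i\psi(x_j)}-\delta_{ij}$), and the cocycle identity then propagates this to all of $\pi_1(S)$, which is exactly the statement $\psi(g)g^{-1}\in\pi_1(S)''$ for all $g$. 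This is a more explicit route to the same conclusion; it has the virtue of making transparent why triviality on $H_1(S^{ab},\bZ)$ (a statement about $g\in\pi_1(S)'$) promotes to triviality on the full metabelian quotient (a statement about all $g\in\pi_1(S)$), a point the paper leaves implicit.
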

\begin{proof}
Since $\psi$ is in the Magnus kernel, then as an automorphism of $\pi_1(S)$, $\psi$ acts trivially on the universal metabelian quotient \[\pi_1(S)/[[\pi_1(S),\pi_1(S)],[\pi_1(S),\pi_1(S)]].\]  Suppose that $\widetilde{S}\to S$ is a finite abelian cover with Galois group $A$.  Then we have a short exact sequence \[1\to H_1(\widetilde{S},\bZ)\to M\to A\to 1.\]  Since $M$ is evidently metabelian, it is a quotient of \[\pi_1(S)/[[\pi_1(S),\pi_1(S)],[\pi_1(S),\pi_1(S)]].\]  It follows that $\psi$ restricts to the identity on $H_1(\widetilde{S},\bZ)$.
\end{proof}

\begin{prop}
Suppose that $\psi$ is contained in the Magnus kernel.  Then the fundamental group of the manifold $M_{\psi}$ is large.
\end{prop}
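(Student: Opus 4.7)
The plan is to apply the main theorem (Theorem \ref{t:large}) directly: it suffices to exhibit, for each $N$, a finite abelian cover $M_N \to M_\psi$ with $b_1(M_N) \geq N$. The construction of these covers was essentially set up earlier in this section; the preceding lemma is exactly the ingredient needed to make it quantitative.

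First I would observe that since $\psi$ lies in the Magnus kernel, it lies in the Torelli group and hence acts trivially on every finite abelian quotient $A$ of $\pi_1(S)$. In particular, for any finite characteristic abelian cover $\widetilde{S}\to S$ with deck group $A$, the mapping class $\psi$ commutes with $A$, so the construction described at the start of the section produces a covering $M'\to M_\psi$ whose deck group is $A$ itself; this cover is abelian. Its fundamental group fits into
\[
1\to\pi_1(\widetilde{S})\to\pi_1(M')\to\bZ\to 1,
\]
with $\bZ$ acting by the lift $\widetilde{\psi}$ of $\psi$. By the lemma just proved, $\widetilde{\psi}$ acts trivially on $H_1(\widetilde{S},\bZ)$, so the coinvariants coincide with $H_1(\widetilde{S},\bZ)$ itself, giving
\[
\rk H_1(M',\bZ)=1+b_1(\widetilde{S}).
\]

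Next I would choose the covers $\widetilde{S}\to S$ to make $b_1(\widetilde{S})$ as large as we want. Because $S$ has finite negative Euler characteristic, the characteristic abelian covers corresponding to the quotients $H_1(S,\bZ)\to H_1(S,\bZ/n\bZ)$ already do the job: Riemann--Hurwitz gives $b_1(\widetilde{S}_n)\to\infty$ as $n\to\infty$. These are automatically characteristic, hence preserved by $\psi$, and have abelian deck group, so the above construction applies and yields a sequence of finite abelian covers $\{M_n'\}$ of $M_\psi$ with $b_1(M_n')\to\infty$.

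Finally I would invoke Theorem \ref{t:large} (with $Y=M_\psi$ itself, so no preliminary finite cover is needed) to conclude that $\pi_1(M_\psi)$ is large. There is no real obstacle: everything reduces to the fact that being in the Magnus kernel is precisely the statement needed to ensure both (i) $\psi$ descends trivially to any abelian deck group, so the cover $M'\to M_\psi$ of the mapping torus is abelian, and (ii) the action on $H_1$ of the covering surface is trivial, so $b_1(M')$ inherits the unbounded growth of $b_1(\widetilde{S})$. The only step requiring a moment of care is verifying that one can use \emph{characteristic} abelian covers of $S$ with growing first Betti number, but the $H_1(S,\bZ/n\bZ)$-covers handle this uniformly.
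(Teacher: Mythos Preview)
Your argument is correct, but it takes a different route from the paper. The paper proves this proposition \emph{before} establishing Theorem~\ref{t:large}: Section~4 is explicitly labeled ``the first application of Lackenby's criterion,'' and the proof there applies Corollary~\ref{c:lackcor} directly. Concretely, the paper fixes a prime $p$, takes the tower $H_i=\ker\bigl(\pi_1(S)\to H_1(S,\bZ/p^i\bZ)\bigr)$, computes via Euler characteristic that $\rk H_1(S_i,\bZ)=p^{2gi}(2g-2)+2$ while $[G:G_i]=p^{2gi}$, and observes that the ratio is bounded away from zero, so Corollary~\ref{c:lackcor} gives largeness. Your version instead packages this into an appeal to Theorem~\ref{t:large}, which is logically fine (there is no circularity) but reverses the expository order: the paper intends the Magnus-kernel case as a warm-up illustrating Lackenby's criterion on its own, before the Alexander-variety machinery is developed. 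What your approach buys is brevity; what the paper's approach buys is independence from Theorem~\ref{t:large} and a concrete, quantitative homology rank gradient computation.
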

\begin{proof}
We suppose that $S$ is closed.  The proof in the non--closed case is analogous.  Let $p$ be any prime, $g>1$ the genus of $S$, and $H_i$ be the kernel of the map \[\pi_1(S)\to H_1(S,\bZ/p^i\bZ).\]  It is clear that $\{H_i\}$ forms a sequence of subgroups such that $\pi_1(S)/H_{i}$ is an abelian $p$--group for each $i$.  Furthermore, the rank of $H_1(S,\bZ/p^i\bZ)$ is $p^{2gi}$.  By an Euler character argument, we have that the genus of the $i^{th}$ surface $S_i$ corresponding to $H_i$ is $p^{2gi}(g-1)+1$.  It follows that the rank of $H_i^{ab}$ is $p^{2gi}(2g-2)+2$.

Now let $\psi$ be any mapping class in the Magnus kernel and let $T_{\psi}$ be suspension of $\psi$ with fundamental group $G$.  Since $\psi$ acts trivially on $H_1(S,\bZ)$, we have that $\psi$ commutes with $\pi_1(S)/H_i$ for each $i$ so that each $H_i$ gives us a finite cover $T_i$ of $T_{\psi}$ with fundamental group $G_i$.  Topologically, $T_i$ is just the suspension of $\psi$ as a mapping class of $S_i$.  Since $\psi$ is in the Magnus kernel, it follows that the rank of $G_i^{ab}$ is $p^{2gi}(2g-2)+2$.  Note furthermore that the index $[G:G_i]$ is equal to $[\pi_1(S):H_i]$, namely $p^{2gi}$.  Since $g>1$, the ratio between the rank of the homology of $G_i$ and $[G:G_i]$ is bounded away from zero.  Since $G/G_i$ is abelian for all $i$, we see that $G$ is large.
\end{proof}

\section{Alexander varieties and largeness}

We begin by relating the unbounded growth of $b_1$ over finite abelian covers to some aspects of the Alexander variety.

\begin{lemma}\label{l:alexinf}
Let $X$ be a finite CW complex and suppose that for each $N$, there is a finite abelian cover $X_N$ of $X$ with $b_1(X_N)\geq N$.  Then the Alexander variety $V_1(\pi_1(X))$ contains infinitely many torsion points.
\end{lemma}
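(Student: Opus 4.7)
The plan is a short contrapositive/contradiction argument using Hironaka's formula (Theorem \ref{t:hiro}). Let $G = \pi_1(X)$, and let $r$ denote the number of rows in the Alexander matrix (so that the Alexander stratification has at most $r$ nontrivial levels $V_1 \supset V_2 \supset \cdots \supset V_r$).

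First I would observe that a character $\chi \in \widehat{G}$ is a torsion point if and only if it has finite order, equivalently if and only if it factors through some finite abelian quotient $\alpha: G \to \Gamma$. Indeed, if $\alpha: G \to \Gamma$ is a surjection onto a finite abelian group, then every character in the image of $\widehat{\alpha}: \widehat{\Gamma} \to \widehat{G}$ is a torsion point of $\widehat{G}$, because $\widehat{\Gamma}$ is itself a finite group.

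Now assume for contradiction that $V_1(G)$ contains only finitely many torsion points, and call that (finite) number $m$. Since the Alexander strata are nested, each $V_i(G)$ is contained in $V_1(G)$, so $V_i(G)$ also contains at most $m$ torsion points. Therefore, for \emph{any} finite abelian cover $X_\alpha \to X$ induced by a surjection $\alpha: G \to \Gamma$ onto a finite abelian group, we have
\[|V_i(G) \cap \widehat{\alpha}(\widehat{\Gamma} \setminus \widehat{1})| \leq m\]
for every $i$, because the left-hand set consists entirely of torsion points lying in $V_i(G)$.

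Applying Theorem \ref{t:hiro}, we therefore obtain the uniform bound
\[b_1(X_\alpha) = b_1(X) + \sum_{i=1}^r |V_i(G) \cap \widehat{\alpha}(\widehat{\Gamma} \setminus \widehat{1})| \leq b_1(X) + rm,\]
valid for every finite abelian cover $X_\alpha \to X$. This contradicts the hypothesis that $b_1(X_N)$ is unbounded as we vary over finite abelian covers. Hence $V_1(G)$ must contain infinitely many torsion points. The argument is essentially a counting argument; there is no serious obstacle beyond correctly invoking Hironaka's formula and noting that images of characters of finite groups consist of torsion points.
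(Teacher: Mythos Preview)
Your proof is correct and follows essentially the same approach as the paper: both arguments invoke Hironaka's formula (Theorem \ref{t:hiro}) together with the nesting $V_r \subset \cdots \subset V_1$ to bound $b_1(X_\alpha)$ in terms of the number of torsion points in $V_1$. The only cosmetic difference is that you phrase the argument as a contrapositive with an explicit uniform bound $b_1(X_\alpha) \le b_1(X) + rm$, whereas the paper argues directly that for each $N$ one can find $\alpha_N$ with $|V_1 \cap \widehat{\alpha_N}(\widehat{\Gamma_N}\setminus\widehat{1})| \ge N$; these are the same argument.
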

\begin{proof}
Recall that if $\al:\pi_1(X)\to\gam$ is a surjection to a finite abelian group, then the first Betti number of the associated cover $X_{\al}$ satisfies \[b_1(X_{\al})=b_1(X)+\sum_{i=1}^r|V_i(\pi_1(X))\cap\yh{\al}(\yh{\gam}\setminus \yh{1})|\] (see Section \ref{s:alexpolys}).  Since $b_1(X_{\al})$ can be made arbitrarily large as $\al$ varies over all finite abelian quotients of $\pi_1(X)$ and since we have a sequence of inclusions \[V_r(\pi_1(X))\subset\cdots\subset V_1(\pi_1(X)),\] it follows that for each $N$ there is a finite abelian quotient \[\al_N:\pi_1(X)\to\gam_N\] for which \[|V_1(\pi_1(X))\cap\yh{\al_N}(\yh{\gam_N}\setminus \yh{1})|\geq N.\]  Since each element of $\yh{\al_N}(\yh{\gam_N}\setminus \yh{1})$ is a torsion point in $V_1$, the lemma follows.
\end{proof}

An immediate consequence of Laurent's Theorem and Lemma \ref{l:alexinf} is that there is a rational plane $P\subset V_1(\pi_1(X))$ whose intersection with $(S^1)^r$ has positive dimension.

\begin{lemma}\label{l:largechar}
Let $X$ be a finite CW complex and let $V_1(\pi_1(X))\subset (\bC^*)^r$ be its Alexander variety.  Suppose there is a rational plane $P\subset V_1(\pi_1(X))$ whose intersection with \[(S^1)^r\subset (\bC^*)^r\] has positive dimension.  Then $\pi_1(X)$ is large.
\end{lemma}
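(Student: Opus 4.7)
The plan is to reduce to Corollary \ref{c:lackcor} by first passing to a finite cover that turns the coset $P$ into a genuine subtorus inside $V_1$. Since $P$ is a rational plane --- a torsion-translate of an algebraic subtorus --- and $P\cap(S^1)^r$ has positive dimension, we can write $P=\xi T$ with $\xi\in\widehat{G}$ (where $G=\pi_1(X)$) a torsion character of some order $M$, and $T\subset\widehat{G}$ an algebraic subtorus of dimension $d\geq 1$. We aim to exploit the $p^n$-torsion of $T$ for a single prime $p$ to build a tower of finite abelian $p$-covers with at least $p^{nd}$ torsion characters in $V_1$.

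My first step is to pass to the cyclic $M$-fold cover $Y\to X$ corresponding to $\xi$, so that $\pi_1(Y)=\ker\xi$ is a finite-index subgroup of $G$ of index $M$. Under the restriction map $r:\widehat{G}\to\widehat{\pi_1(Y)}$, which has finite kernel $\langle\xi\rangle$, the image $S:=r(T)$ remains a subtorus of dimension $d$. The key geometric point is that $S\subset V_1(\pi_1(Y))$: for any $t\in T$ we have $\xi t\in P\subset V_1(G)$ and $r(\xi t)=r(t)$ since $r(\xi)=1$, so it suffices to check that restriction of characters to a finite-index subgroup preserves membership in $V_1$. I would justify this by the surjectivity over $\bC$ of the transfer map $H_1(\pi_1(Y),\bC_{\chi|_{\pi_1(Y)}})\to H_1(G,\bC_{\chi})$ on twisted first homology, an elementary consequence of the $\bC$-invertibility of the covering degree.

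Next I would fix any prime $p$ and use the duality between subtori of the identity component of $\widehat{\pi_1(Y)}$ and free abelian quotients of $\pi_1(Y)^{ab}/\mathrm{torsion}$ to produce a surjection $\pi_1(Y)\twoheadrightarrow\bZ^d$ dual to $S$. Composing with reduction modulo $p^n$ gives a compatible tower of surjections $\beta_n:\pi_1(Y)\twoheadrightarrow(\bZ/p^n)^d$ whose kernels $G_n$ are nested normal subgroups of $\pi_1(Y)$ with abelian $p$-power quotients. The image of $\widehat{(\bZ/p^n)^d}$ under $\widehat{\beta_n}$ is precisely the $p^n$-torsion subgroup $S[p^n]\subset S$, which has $p^{nd}$ elements all lying in $V_1(\pi_1(Y))$. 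Applying Theorem \ref{t:hiro} gives $b_1(G_n)\geq p^{nd}-1$, while $[\pi_1(Y):G_n]=p^{nd}$, so the tower has homology rank gradient bounded below. Corollary \ref{c:lackcor} then implies $\pi_1(Y)$ is large, and since $\pi_1(Y)$ has finite index in $\pi_1(X)$, so is $\pi_1(X)$.

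The step I expect to be the main obstacle is the $V_1$-functoriality in the second paragraph --- verifying that restriction to a finite-index subgroup preserves membership in $V_1$. Although this is a standard transfer-map consequence, it must be invoked carefully. One must also confirm that $r(T)$ retains dimension $d$, which holds because $T\cap\langle\xi\rangle$ is finite. Once these are in place, the remainder of the argument is routine bookkeeping with the Alexander stratification and Lackenby's largeness criterion.
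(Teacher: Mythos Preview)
Your argument is correct, and it takes a genuinely different route from the paper's.  The paper never leaves $\pi_1(X)$: it chooses explicit coordinates on $(\bC^*)^r$, parametrizes a one--dimensional rational subtorus $\gamma\subset P$ as
\[
t\mapsto(\exp 2\pi i(a_1t+s_1),\ldots,\exp 2\pi i(a_mt+s_m),\exp 2\pi i s_{m+1},\ldots,\exp 2\pi i s_r),
\]
lets $B$ be the order of the group generated by the offsets $\exp(2\pi i s_j)$, picks a prime $p$ coprime to $B$ and to the nonzero $a_i$, and then builds explicit cyclic quotients $\alpha_n:\pi_1(X)\to\bZ/p^nB\bZ$ so that at least a $1/B$ fraction of the dual characters land on $\gamma$.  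After checking that $\alpha_{n+k}$ reduces to $\alpha_n$ (where $k$ is the order of $p$ mod $B$), one gets a $p$--tower over the degree--$p^kB$ cover and applies Corollary~\ref{c:lackcor}.

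Your approach replaces all of this coordinate bookkeeping with a single functoriality step: restrict characters to the finite--index subgroup $\ker\xi$, so that the coset $P=\xi T$ becomes an honest subtorus $S=r(T)$ through the identity in $\widehat{\pi_1(Y)}$, and then the $p^n$--torsion of $S$ already sits in $V_1(\pi_1(Y))$ for \emph{every} prime $p$.  The cost is that you must invoke the inequality $\dim H_1(H,\bC_{\chi|_H})\geq\dim H_1(G,\bC_\chi)$ for $H\leq G$ of finite index --- which, as you note, is the standard consequence of $\mathrm{cor}\circ\mathrm{tr}=[G:H]$ on $H_1(G,\bC_\chi)$ --- and you conclude largeness for $\pi_1(Y)$ rather than $\pi_1(X)$ directly (immaterial, since largeness passes to finite--index overgroups).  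The payoff is a shorter, coordinate--free argument that uses the full dimension $d$ of $T$ rather than cutting down to a one--dimensional slice, and that avoids the delicate compatibility check between $\alpha_{n+k}$ and $\alpha_n$.

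One small point: you should note explicitly (as you implicitly use) that since $P\subset\widehat G$ and $P$ contains a torsion point, one may take $\xi\in\widehat G$ and hence $T=\xi^{-1}P\subset\widehat G$ as well; this ensures $\ker\xi$ and the restriction map $r$ are defined as you want.
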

\begin{proof}
Clearly we may suppose that at least one component of $V_1(\pi_1(X))$ has positive dimension.  Since $V_1(\pi_1(X))$ sits inside of $\Hom(\pi_1(X),\bC^*)$, we may suppose that $b_1(X)\geq 1$.  Write $H_1(X,\bZ)\cong\bZ^m\oplus A$, where $m>1$ and where $A$ is a finite abelian group.  Choose a basis $\{w_1,\ldots,w_m\}$ for $H^1(X,\bZ)=\Hom(\pi_1(X),\bZ)$.  Composition of integral cohomology classes of $X$ with elements of $\Hom(\bZ,\bC^*)$ associates to each basis element $w_i$ a distinguished copy of $\bC^*$ inside of $(\bC^*)^m$.  Now note that $\yh{A}\cong A$, and that $\yh{A}$ consists of a finite set of points in $(S^1)^{r-m}$.  Thus, we may think of $\Hom(\pi_1(X),\bC^*)$ as sitting inside of \[(\bC^*)^m\times\yh{A}\subset (\bC^*)^r.\]  We will write $\{u_1,\ldots,u_m\}$ for the dual basis of $H_1(X,\bZ)/A$ determined by $\{w_1,\ldots,w_m\}$.  Write \[A=\bZ/n_1\bZ\oplus\cdots\oplus\bZ/n_{r-m}\bZ,\] where $n_i|n_{i+1}$, and let $\{x_1,\ldots,x_{r-m}\}$ be each identified with the corresponding generator $1\in\bZ/n_i\bZ$.

Choose a rational splitting $(\bC^*)^r$ which is compatible with the choices in the previous paragraph.  Since $P\cap (S^1)^r$ has positive dimension, there is a rational, one--dimensional subtorus of $(S^1)^r$ which is also contained in $V_1(\pi_1(X))$.  Such a subtorus is given (with respect to the chosen splitting) by a map $\bR\to(S^1)^r\subset(\bC^*)^r$ of the form \[\gamma:t\mapsto (\exp (2\pi i (a_1t+s_1)),\ldots,\exp (2\pi i (a_mt+s_m)),\exp(2\pi i s_{m+1}),\ldots,\exp(2\pi i s_r)),\] where $\{a_1,\ldots,a_m\}$ are integers which are not simultaneously zero, and where $\{s_1,\ldots,s_r\}$ are rational.  Consider the subgroup $\gam$ of $S^1$ generated by the roots of unity $\{\exp(2\pi i s_i)\}$, which has order $B$.  Choose an identification of the group $\gam$ with $\bZ/B\bZ$, and write $d_i\pmod B$ for the image of $\exp(2\pi is_i)$ under this identification.

Choose a prime $p$ which divides none of the nonzero $\{a_i\}$ nor $B$.  For each $n$, we consider the homomorphism $\al_n:\pi_1(X)\to\bZ/p^nB\bZ$ given by \[u_i\mapsto a_i+p^nd_i\pmod{p^nB}\] and by \[x_j\mapsto p^nd_j\pmod{p^nB}.\]  Let $\zeta$ be a primitive $p^nB^{th}$ root of unity, viewed as an element of $\yh{\bZ/p^nB\bZ}$.  The image of $\zeta$ under $\yh{\al_n}$ is just \[(\zeta^{a_1+p^nd_1},\ldots,\zeta^{a_m+p^nd_m},\zeta^{p^nd_{m+1}},\ldots,\zeta^{p^nd_r}).\]  Since $\zeta$ is a primitive $p^nB^{th}$ root of unity, $\zeta^{p^n}$ is a primitive $B^{th}$ root of unity.  Whenever $\zeta^{p^n}$ is equal to the generator $1\in\bZ/B\bZ$ via the inverse of the identification of $\gam$ with $\bZ/B\bZ$, we have that $\zeta^{p^nd_i}$ is equal to the root of unity $\exp(2\pi i s_i)$.  There are $B$ distinct $B^{th}$ roots of unity, so for at least $1/B$ of the primitive $p^nB^{th}$ roots of unity, we will have that $\zeta^{p^nd_i}$ is equal to $\exp(2\pi i s_i)$.  Whenever this is the case, we have that the point \[(\zeta^{a_1+p^nd_1},\ldots,\zeta^{a_m+p^nd_m},\zeta^{p^nd_{m+1}},\ldots,\zeta^{p^nd_r})\] lies on the subtorus $\gamma$.  Writing $X_n$ for the cover corresponding to $\al_n$, the formula \[b_1(X_{\al})=b_1(X)+\sum_{i=1}^r|V_i(\pi_1(X))\cap\yh{\al}(\yh{\gam}\setminus \yh{1})|\] implies that \[b_1(X_n)\geq \frac{1}{B}p^nB-1=p^n-1.\]

Let $k$ be the multiplicative order of $p$ modulo $B$.  We claim that $X_{n+k}$ covers $X_n$.  Since $B$ and $p$ are relatively prime, we have a splitting for each $n$: \[\bZ/p^nB\cong\bZ/p^n\bZ\oplus\bZ/B\bZ,\] and we have projections $q_1$ and $q_2$ onto the two factors.  The map $\al_{n+k}$ is given by taking \[u_i\mapsto a_i+p^{n+k}d_i\pmod{p^{n+k}B}\] and \[x_j\mapsto p^{n+k}d_j\pmod{p^{n+k}B}.\]  Considering the images of $\{u_i\}$ and $\{x_j\}$ in \[\bZ/p^nB\cong\bZ/p^n\bZ\oplus\bZ/B\bZ,\] we see that the image of $p^k(p^nd_i)$ is equal to the image of $p^nd_i$.  Indeed, $d_i$ can be written as $(q_1(d_i),q_2(d_i))$.  Since the first coordinate becomes zero when multiplied by $p^n$, we see that \[p^{n+k}(q_1(d_i),q_2(d_i))=(0,p^{n+k}q_2(d_i))=(0,p^nq_2(d_i))=p^n(q_1(d_i),q_2(d_i)).\]  It follows that $\al_n$ is equal to the reduction of $\al_{n+k}$ modulo $p^nB$.

Thus, we have a tower of covers \[\cdots\to X_{2k}\to X_k\to X,\] where $X_k\to X$ has degree $p^kB$, and where $X_{(n+1)k}\to X_{nk}$ has degree $p^k$ when $n>0$.  In particular $\{X_{nk}\}_{n>0}$ is a tower of abelian $p$--power covers with \[b_1(X_{nk})\geq \frac{1}{B}p^{nk}B-1=p^{nk}-1.\]  It follows that this tower has positive homology rank gradient, so that by Corollary \ref{c:lackcor}, $X$ is large.
\end{proof}

Finally, we may give a proof of the characterization of large groups:
\begin{proof}[Proof of Theorem \ref{t:large}]
Let $\gam$ be a large group.  Then there is a finite index subgroup $\gam'<\gam$ which surjects to the nonabelian free group $F_2$.  Choosing a sequence of surjective homomorphisms \[\phi_N:F_2\to\bZ\to\bZ/N\bZ,\] we have that $b_1(ker(\phi_N))\geq N+1$.  Precomposing $\phi_N$ with a surjection $\gam'\to F_2$, it follows that for each $N$, there is a finite index subgroup $\gam'_N<\gam$ such that $\gam'/\gam'_N$ is abelian and such that $b_1(\gam'_N)\geq N+1$.  Thus (1) implies (2).  (2) implies (3) is a consequence of Lemma \ref{l:alexinf} and the remark immediately following the proof.  (3) implies (1) is a consequence of Lemma \ref{l:largechar}.
\end{proof}

\section{Applications of Theorem \ref{t:large}}
We can now give proofs of the corollaries to Theorem \ref{t:large} which were mentioned in the introduction.

\subsection{Non--compact hyperbolic $3$--manifolds}
\begin{cor}[Cf. Corollary \ref{c:clr}]
Let $T_{\psi}$ be a fibered hyperbolic $3$--manifold with at least one cusp.  Then $T_{\psi}$ is large.
\end{cor}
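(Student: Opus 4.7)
The plan is to apply Theorem \ref{t:large}(3) by exhibiting a finite cover of $T_\psi$ whose Alexander variety contains infinitely many torsion points. The essential geometric input is that since $T_\psi$ has a cusp, the fiber $S$ has nonempty boundary, and the monodromy $\psi$ permutes the boundary components $c_1,\ldots,c_b$ of $S$. In particular, the action of $\psi_*$ on the span of boundary classes in $H_1(S,\bC)$ is by a signed permutation, so every eigenvalue on this invariant subspace is a root of unity. This is the mechanism that will produce cyclotomic behavior of the Alexander polynomial, in contrast to the closed case where a pseudo-Anosov action on $H_1$ typically has spectral radius strictly greater than one.

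First I would replace $T_\psi$ by the finite cyclic cover $T_{\psi^n}$, where $n$ is the order of the permutation induced by $\psi$ on $\{c_1,\ldots,c_b\}$; this cover corresponds to the composition of the monodromy quotient $\pi_1(T_\psi)\to\bZ$ with reduction mod $n$. In $T_{\psi^n}$ the new monodromy fixes each boundary component setwise, and the $(b-1)$-dimensional span of boundary classes lies in the fixed subspace of the new $\psi_*$, giving $b_1(T_{\psi^n}) \geq 1+(b-1) = b$. When $b \geq 2$ this is already at least $2$; when $b = 1$, I would first create multiple boundary components by passing to a further finite abelian cover of $S$ coming from a surjection $\pi_1(S)\to\bZ/k$ which is nontrivial on the unique peripheral loop, and then run the argument above for that cover.

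With $b_1 \geq 2$ and $\psi$ fixing each boundary component, I would invoke the theorem of McMullen quoted at the end of Section \ref{s:alexpolys}: for the Alexander polynomial $A\in\bZ[H\oplus\bZ]$ of $T_\psi$ and any character $\chi$ of $H$, the polynomial $\chi(A)(t)$ is the characteristic polynomial of $\psi_*$ on $H_1(S,\bC_\chi)$. At $\chi=1$ this specialization contains the cyclotomic factor $(t-1)^{b-1}$ coming from the fixed boundary classes, exhibiting the single torsion character $(1,1)$ in $V_1$. The key remaining task is to upgrade this to a positive-dimensional rational subtorus of torsion points in $V_1(\pi_1(T_\psi))$. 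For this I would use the peripheral structure of $T_\psi$: each boundary component $c_i$ of $S$ becomes a longitude of a cusp commuting with a corresponding meridian $m_i$, and in any presentation adapted to the cusps the Fox derivative of the commutator relation $[c_i,m_i]=1$ contributes a genuinely multi-variable factor to $A$ whose zero locus meets $(S^1)^r$ in positive dimension. Lemma \ref{l:largechar} then yields that $\pi_1(T_\psi)$ is large.

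The main obstacle is the last step: producing a positive-dimensional rational subtorus of $V_1$ from the peripheral commutator relations, rather than only collecting finitely many torsion characters from the single specialization at $\chi=1$. This requires a careful computation with the Alexander matrix built from a peripheral presentation, and is exactly where the cusp hypothesis (as opposed to just the fibered hypothesis) enters decisively. A secondary but essentially routine issue is reducing the single-puncture case $b=1$ to the main case via the auxiliary cover of $S$ described above before the peripheral analysis of $A$ can be carried out.
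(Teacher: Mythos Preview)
Your proposal has two genuine gaps.

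First, the case $b=1$ is not routine in the way you suggest. On a once-punctured surface $S$ of genus $g$, the peripheral loop $c$ equals $\prod_{i=1}^g [a_i,b_i]$ and is therefore trivial in $H_1(S,\bZ)$. Hence there is \emph{no} surjection $\pi_1(S)\to\bZ/k\bZ$ which is nontrivial on $c$, and no abelian cover of $S$ can increase the number of punctures. To create additional punctures from a single one you must use a nonabelian (e.g.\ characteristic) cover of $S$ before anything else; this is exactly what the paper does, and it is not the ``secondary but essentially routine issue'' you describe.

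Second, and more seriously, the step you flag as the main obstacle is not carried out and, as formulated, does not obviously work. In the natural presentation \[\pi_1(T_\psi)=\langle x_1,\ldots,x_n,t\mid tx_it^{-1}=\psi_*(x_i)\rangle\] the relation $[c_i,m_i]=1$ is a \emph{consequence} of the defining relations, not a defining relation, so its Fox derivatives are not rows of the Alexander matrix. Your assertion that the cusp commutators ``contribute a genuinely multi-variable factor to $A$ whose zero locus meets $(S^1)^r$ in positive dimension'' is therefore unjustified; a priori the specialization $\chi(A)$ at $\chi=1$ only exhibits the single torsion point you already found, and nothing in your argument produces the required rational subtorus. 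McMullen's theorem tells you the characteristic polynomial of $\psi_*$ on each $H_1(S,\bC_\chi)$, but for torsion $\chi\neq 1$ you have no control over whether that polynomial has roots on the unit circle.

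The paper bypasses the Alexander variety entirely and uses criterion (2) of Theorem~\ref{t:large} directly. After passing to a finite (possibly nonabelian) cover $X=T_{\yt\psi}$ whose fiber $S'$ has at least three punctures fixed pointwise by $\yt\psi$, one takes a peripheral loop $\gamma$ about a puncture; since $[\gamma]$ is $\yt\psi$--invariant and nontrivial in $H_1(S',\bZ)$ (there being several punctures), the $\bZ/N\bZ$--cover of $S'$ dual to a class pairing nontrivially with $[\gamma]$ is $\yt\psi$--invariant and has at least $N$ punctures. A further cyclic cover in the $S^1$--direction makes the lifted monodromy fix those punctures pointwise, and the resulting mapping torus is a finite abelian cover of $X$ with $b_1\geq N$. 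This is both shorter and avoids the unproved positive-dimensionality claim.
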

\begin{proof}
Let $S$ be a fiber of $T_{\psi}$.  Then $S$ has at least one puncture.  Choose a finite cover $S'$ of $S$ to which $\psi$ lifts and which has at least three punctures.  Replacing the lift $\yt{\psi}$ by some positive power if necessary, we may assume that the punctures of $S'$ are preserved point--wise.  Let $X$ denote suspension of the lift of $\yt{\psi}$ acting on $S'$.

Let $[\gamma]$ denote the homology class of a small loop about one of the punctures of $S'$.  Clearly, $[\gamma]$ is invariant under the action of $\yt{\psi}$.  There is a sequence of finite, abelian, $\yt{\psi}$--invariant covers of $S'$ which ``unwind" $\gamma$.  For each $N$, we can find such a cover $S_N$ such that the number of punctures is at least $N$.  Replacing $\yt{\psi}$ by a further power will stabilize these punctures pointwise, resulting in a finite abelian cover of $X$ whose first Betti number is at least $N$.  It follows that $X$ is large.
\end{proof}

\subsection{Mapping tori}
Let $\psi\in\Mod(S)$ and let $T_{\psi}$ be the associated mapping torus.  We are ready to give the proof of Corollary \ref{c:threeman}.  Let us recall the statement:

\begin{cor}
Let $\psi\in\Mod(S)$ be any mapping class.
\begin{enumerate}
\item
If $\psi$ is not pseudo-Anosov, then $T_{\psi}$ is large.
\item
If there is no lift $\yt{\psi}$ to a finite cover such that $\rho(\yt{\psi}_*)>1$, then $T_{\psi}$ is large.
\end{enumerate}
\end{cor}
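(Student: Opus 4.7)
My plan is to treat the two parts separately, with Part~(2) carrying the substantive content.

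For Part~(1), if $\psi$ is periodic then $\psi^n$ is the identity for some $n$, so $T_{\psi^n} = S \times S^1$ is a finite cover of $T_\psi$ and the surjection $\pi_1(S) \twoheadrightarrow F_2$ witnesses largeness. If $\psi$ is reducible, I pick a component $c$ of a $\psi$--invariant multicurve and replace $\psi$ by a power fixing $c$ pointwise, setting $Y = T_{\psi^k}$ for such a power. For each $N$, I take the $\psi$--invariant cyclic $N$--fold abelian cover $\widetilde{S}_N \to S$ dual to the Poincar\'e dual of $[c]$; the preimage of $c$ in $\widetilde{S}_N$ consists of at least $N$ disjoint simple closed curves, and after a further power of $\psi$ on the monodromy side (assembled with $\widetilde{S}_N$ into a single finite abelian cover of $Y$) each of these curves is fixed pointwise by the lifted monodromy. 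The $\widetilde{\psi}$--invariant subspace of $H_1(\widetilde{S}_N, \bC)$ then has dimension at least $N$, producing a finite abelian cover of $Y$ with $b_1 \geq N$, exactly in analogy with the cusped fibered case just treated, with $c$ playing the role of a puncture loop. Theorem~\ref{t:large} then yields largeness.

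For Part~(2), I first reduce to $b_1(T_\psi) \geq 2$: the hypothesis applied to the trivial lift gives $\rho(\psi_*) \leq 1$, and since $\psi_* \in GL(H_1(S,\bZ))$ has $\det \psi_* = \pm 1$ the eigenvalues all lie on the unit circle, so $\rho(\psi_*) = 1$; the remark following the statement of the corollary then provides $k \geq 1$ with $b_1(T_{\psi^k}) \geq 2$, and I pass to $T_{\psi^k}$. Now write $H = \Hom(H^1(S,\bZ)^\psi, \bZ) \neq 0$ and let $A \in \bZ[H][t, t^{-1}]$ be the Alexander polynomial of $T_\psi$ associated to the quotient $G = H \oplus \bZ$ of $\pi_1(T_\psi)$. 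The finite characters $\chi$ of $H$ correspond exactly to $\psi$--invariant finite abelian covers $\widetilde{S}_\chi \to S$, and the theorem at the end of Section~\ref{s:alexpolys} identifies the characteristic polynomial of $\psi$ on $H_1(S, \bC_\chi)$ with the specialization $\chi(A) \in \bC[t, t^{-1}]$. For the lift $\widetilde{\psi}$ of $\psi$ to $\widetilde{S}_\chi$, the hypothesis gives $\rho(\widetilde{\psi}_*) \leq 1$, so every eigenvalue of the integer matrix $\widetilde{\psi}_* \in GL(H_1(\widetilde{S}_\chi, \bZ))$ has absolute value at most one; since $\det \widetilde{\psi}_* = \pm 1$, these eigenvalues all lie on the unit circle, and Kronecker's proposition from Section~\ref{s:back} forces each to be a root of unity.

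The roots of $\chi(A)$ occur among these eigenvalues, being those on the $\chi$--isotypic summand of $H_1(\widetilde{S}_\chi,\bC)$, and $\chi(A)$ has positive degree in $t$ because $\dim H_1(S, \bC_\chi) > 0$ for $S$ of negative Euler characteristic; so for every finite $\chi$ I extract a root of unity $t_0$ with $\chi(A)(t_0) = 0$, yielding a torsion point $(\chi, t_0) \in V(A) \subset V_1(\pi_1(T_\psi))$. Since the nontrivial free abelian group $H$ admits infinitely many finite characters, these torsion points are mutually distinct in their $H$--coordinate, so $V_1$ contains infinitely many torsion points and Theorem~\ref{t:large} concludes. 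I expect the main obstacle to be this uniform Kronecker step across all finite $\chi$ simultaneously: the crucial point is that identifying $\chi(A)$ as a factor of the characteristic polynomial of $\widetilde{\psi}_*$ on an integer lattice is what legitimizes the application of Kronecker's proposition to its roots, uniformly as $\chi$ ranges over the infinitely many finite characters of $H$.
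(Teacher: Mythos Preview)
Your treatment of Part~(2) is correct and in fact somewhat cleaner than the paper's. The paper argues via condition~(2) of Theorem~\ref{t:large}: it fixes one $\psi^k$--invariant class $\phi\in H^1(S,\bZ)$, passes to the $\bZ/N\bZ$--cover $S_N$, and uses Kronecker to find a power of the lifted monodromy acting trivially on $H_1(S_N,\bC)$, producing abelian covers of $T_{\psi^k}$ with $b_1\ge N$. You instead invoke McMullen's description of $\chi(A)$ as the characteristic polynomial of $\psi$ on $H_1(S,\bC_\chi)$, feed each finite character $\chi$ of $H$ through Kronecker to extract a root of unity $t_0$, and land directly on condition~(3) via the torsion points $(\chi,t_0)\in V(A)\subset V_1$. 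Both routes hinge on the same Kronecker step; yours avoids the bookkeeping of the powers $k_N$ and exploits the Alexander polynomial more directly.

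Your Part~(1) reducible argument, however, has a genuine gap. In the cyclic $N$--fold cover $\yt S_N\to S$ dual to $[c]$, the $N$ lifts $c_1,\dots,c_N$ of $c$ are \emph{all homologous}: each copy of $S\setminus c$ inside $\yt S_N$ is a $2$--chain whose boundary is $c_i - c_{i-1}$, so $[c_i]=[c_{i-1}]$ in $H_1(\yt S_N,\bZ)$ for every $i$. Thus the lifts span only a one--dimensional subspace, and your conclusion that the $\yt\psi$--invariant homology has dimension $\ge N$ fails. (The analogy with the cusped case breaks down precisely here: there the growth in $b_1$ comes from the proliferation of \emph{other} punctures in the cover, not from lifts of the chosen peripheral loop.) The paper repairs this by first passing to a cover $\yt S$ of $S$ where the lifted reduction system contains two nonseparating components $\gamma,\alpha$ with linearly independent homology classes; one then forms the cyclic cover $S_N$ by cutting along $\gamma$, but tracks the $N$ lifts of $\alpha$ rather than of $\gamma$. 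Since $\alpha$ sits in a single copy of $\yt S\setminus\gamma$ and pairs nontrivially with classes orthogonal to $\gamma$, its lifts do produce the required growth in $\yt\psi$--invariant homology. You need a second curve.
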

\begin{proof}
Suppose that $\psi$ is trivial or has finite order.  Then there is an $N$ such that $\psi^N$ is trivial.  Then $\pi_1(T_{\psi^N})\cong\bZ\times \pi_1(S)$.  Since $\chi(S)<0$, we have that $\pi_1(S)$ is large and therefore $T_{\psi}$ is large.

Suppose $\psi$ has infinite order and is reducible.  Write $\mathcal{C}$ for the canonical reduction system of $\psi$.  It is easy to see that there is a cover $\yt{S}$ of $S$ to which $\psi$ lifts and such that the homology classes of the components of the total lift of $\mathcal{C}$ span a subspace of $H_1(\yt{S},\bZ)$ of dimension at least two.  Replacing a lift $\yt{\psi}$ of $\psi$ to $\yt{S}$ by a power, we may assume that the components of the total lift of $\mathcal{C}$ are fixed.  Choosing the cover $\yt{S}\to S$ to be characteristic and choosing the power of $\yt{\psi}$ to be large enough, we may assume that the suspension of $\yt{\psi}$ acting on $\yt{S}$ is a cover $X$ of $T_{\psi}$.  Let $\gamma$ be a nonseparating component of the total lift of $\mathcal{C}$ to $\yt{S}$, and let $\al$ be another nonseparating component of the lift of $\mathcal{C}$ whose homology class is independent of that of $\gamma$.  Cut open $\yt{S}$ along $\gamma$.  For each $N$, let $S_N$ be the cover of $\yt{S}$ given by stringing together $N$ copies of $\yt{S}\setminus\gamma$ cyclically.  For each $N$, we have that the Galois group of the cover $S_N\to\yt{S}$ commutes with $\yt{\psi}$.  Replacing $\yt{\psi}$ by a power, each of the $N$ components of the total lift of $\al$ to $S_N$ will be fixed.  Thus, we obtain a family of abelian covers of $X$ whose first Betti numbers are arbitrarily large.  By Theorem \ref{t:large}, we have that $T_{\psi}$ is large.

Finally, suppose that $\psi$ is pseudo-Anosov, and assume that $\rho(\yt{\psi}_*)=1$ for each lift of $\psi$ to a finite cover of $S$.  Since $\rho(\psi_*)=1$, there is a $k>0$ such that $b_1(T_{\psi^k})\geq 2$.  Equivalently, $\Hom(\pi_1(S),\bZ)^{\psi^k}$ is nontrivial.  Fix a $\psi^k$--invariant cohomology class $\phi$, and let $\phi_N$ be the map $\pi_1(S)\to\bZ/N\bZ$ given by reducing the image of $\phi$ modulo $N$.  For each $N$, we will write $S_N$ for the corresponding cover of $S$.  For each irreducible character $\chi$ of $\bZ/N\bZ$, we may consider the $\chi$--eigenspace of $H_1(S_N,\bC)$, which we can identify with the twisted homology space $H_1(S,\bC_{\chi})$.  Since $\rho(\yt{\psi}_*)=1$ for all lifts of $\psi$ to finite covers of $S$, we have that some further nonzero power $\psi^{k'}$ of $\psi^k$ fixes a vector in $H_1(S,\bC_{\chi})$.  Since there are $N$ irreducible representations of $\bZ/N\bZ$, it follows that there is a nonzero exponent $k_N$ such that the suspension $\yt{\psi}^{k_N}$ acting as a mapping class of $S_N$ has at least $N$ linearly independent fixed vectors.  It follows that $T_{\yt{\psi}^{k_N}}$ has first Betti number at least $N+1$.  Notice that $T_{\yt{\psi}^{k_N}}$ is a finite abelian cover of the manifold $T_{\psi^k}$.  It follows that for each $N$, the manifold $T_{\psi^k}$ has a finite abelian cover with first Betti number at least $N$.  By Theorem \ref{t:large}, it follows that $T_{\psi}$ is large.
\end{proof}

\subsection{Combinatorial group theory}
In this subsection, we show that Theorem \ref{t:large} can be used to recover the Baumslag--Pride Theorem, namely that a group which admits a presentation with at least two more generators than relations is large.  Baumslag and Pride proved their result in \cite{BP}.  Ab\'ert has shown in \cite{Abert} that a Baumslag--Pride group itself may not surject onto a free group, and the author gave a bound on the index of a subgroup which surjects onto a free group in \cite{Koblarge}.

We begin by recalling the Reidemeister--Schreier rewriting process (we copy \cite{LySch} nearly verbatim).  Let $G=F/N$ be a group, where $F$ is free with free basis $X$, and where $N$ is the normal closure of a subset $R\subset F$.  Write $\phi$ for the canonical projection from $F$ to $G$, and let $H<G$ be a subgroup.  Write $\yt{H}$ for the preimage of $H$ under $\phi$, and let $T$ be a transversal for $\yt{H}$ in $F$.  For $w\in F$, we write $\ol{w}$ for the unique element of $T$ which satisfies \[Hw=H\ol{w}.\]  For $t\in T$ and $x\in X$, write \[\gamma(t,x)=tx(\ol{tx})^{-1},\,\,\gamma(t,x^{-1})=tx^{-1}(\ol{tx^{-1}})^{-1}=\gamma(tx^{-1},x)^{-1}.\]  Then $H$ admits a presentation \[H=\langle X^*\mid R^*\rangle,\] where $X^*$ consists of elements $\{\gamma(t,x)^*\}$ in one--to--one correspondence with all nontrivial elements of the form $\gamma(t,x)$ for $x\in X$ and $t\in T$, and $R^*$ is defined as follows: writing \[w=y_1\cdots y_n,\] where each letter appearing on the right hand side is in $X$, we write \[\tau(w)=\gamma(1,y_1)^*\gamma(\ol{y_1},y_2)^*\cdots\gamma(\ol{y_1\cdots y_{n-1}},y_n)^*.\]  Then $R^*$ consists of all the elements $\{\tau(trt^{-1})\}$, where $t\in T$ and $r\in R$.

We will prove Corollary \ref{c:bp} in the special case where $G$ has three generators and one relation.  The proof in general will not be difficult to formulate.  The reader may note some similarities between the beginnings of the proof here and the original proof in \cite{BP}.

\begin{proof}[Proof of Corollary \ref{c:bp}]
We write \[G=\langle t,a,b\mid w\rangle.\]  By applying an automorphism of $F_3$, we may assume that the exponent sum of $t$ in $w$ is zero.  For each $N$, we will write $G_N$ for the kernel of the map \[\phi_N:G\to\bZ/N\bZ\] which sends $t$ to the generator $1$ and both $a$ and $b$ to zero.  Since the exponent sum of $t$ in $w$ is zero, this homomorphism is defined.  We will write $T=\{1,t,\ldots,t^{N-1}\}$ for a Schreier transversal for $G_N$.  By the Reidemeister--Schreier rewriting process, we see that $G_N$ admits a presentation $\langle X_N\mid R_N\rangle$, where by construction, $R_N$ contains at most $N$ elements.  The set $X_N$ is in bijective correspondence with nonidentity elements of the form $\gamma(s,x)$, where $s\in T$ and $x\in\{t,a,b\}$.  For each $s\in T$, it is trivial to check that $\ol{sa}=s$ and that $\ol{sb}=s$, since both $a$ and $b$ are in the kernel of $\phi_N$.  Thus, for each $s\in T$, we have that \[\gamma(s,a)=sas^{-1},\] and similarly after replacing $a$ by $b$.  It follows that $X_N$ consists of at least $2N$ elements.  Passing to the abelianization of $G_N$, we see that the rank of $H_1(G_N,\bZ)$ is at least $N$.  The corollary follows by Theorem \ref{t:large}.
\end{proof}

\end{document}